\newtheorem{theorem}{Theorem}[subsection]
\newtheorem{remark}[theorem]{Remark}
\newtheorem{proposition}[theorem]{Proposition}
\newtheorem{definition}[theorem]{Definition}
\newtheorem{lemma}[theorem]{Lemma}
\newcommand{\Spin}{\mathrm{Spin}}
\newcommand{\EndC}{\mathrm{End}}
\newcommand{\Cliff}{C\ell}
\newcommand{\Herm}{\mathrm{Herm}}
\newcommand{\F}{\mathrm{F}_4}
\renewcommand{\P}{\mathrm{P}_4}
\newenvironment{smatrix}{\left(\begin{smallmatrix}}{\end{smallmatrix}\right)}
\newcommand{\octo}{\mathbb{O}}
\newcommand{\R}{\mathbb{R}}
   \renewcommand{\C}{\mathbb{C}}
   \newcommand{\C}{\mathbb{C}}
\newcommand{\V}{\mathbb{V}_9}
\newcommand{\Id}{\mathit{Id}} % jednotkova matice
\newcommand{\Tr}{\mathrm{Tr\,}}
\newcommand{\plane}{\mathbb{OP}^2_0}
\newcommand{\affOP}{\widehat{\mathbb{OP}}{}^2_0}
\title{Hyperplane section $\plane$ of the complex Cayley plane as the homogeneous space $\mathrm{F_4/P_4}$}
\author{Karel Pazourek, V\'it Tu\v{c}ek\footnote{The author was supported by GA\v{C}R 201/09/H012 and by SVV 301-09/10822.}, Peter Franek\footnote{The author was supported by MSM 0021620839 and GA\v CR 201/08/397.}}
\begin{document}
   \maketitle
   \begin{abstract}
      We prove that the exceptional complex Lie group $\F$ has a transitive action on the hyperplane section of the complex Cayley plane $\mathbb{OP}^2$. Our proof is direct and constructive.  We use an explicit realization of the vector and spin actions of $\Spin(9,\C) \leq \F$. Moreover, we identify the stabilizer of the $\F$-action as a parabolic subgroup $\P$ (with Levi factor $\mathrm{B_3T_1}$) of the complex Lie group $\F$. In the real case we obtain an analogous realization of $\F^{(-20)}/\P$.
   \end{abstract}
% \thanks{The second author was supported by GA\v{C}R 201/09/H012.}

\section{Introduction}

The octonionic projective plane $\mathbb{OP}^2$, also called Cayley plane or octave plane, has been thoroughly treated in the literature. It appears in numerous contexts. It is a projective plane where the Desargues axiom does not hold. It was firstly considered by Ruth Moufang \cite{Moufang}, who found a relation of the so called little Desargues axiom and the alternativity of the coordinate ring. It is well known that $\mathbb{OP}^2$ is a Riemanian symmetric manifold $\mathrm{F}_4/\mathrm{Spin}(9)$. Due to its relation to the exceptional Jordan algebra $\mathcal{J}_3(\octo)$, there is also a connection of this plane to a model of quantum mechanics considered by Neumann, Jordan and Wigner \cite{JNW}. More recently, authors of \cite{Dray} show that the the Cayley plane consists of normalized solutions of a Dirac equation.  For more details and connections with physics we refer to the article by Baez \cite{Baez}.

It is possible to mimic the the construction of $\mathbb{OP}^2$ via equivalence classes of triples of octionios, but usually Freudenthal's approach via the exceptional Jordan algebra $\mathcal{J}_3(\octo)$ is used. The idea is that lines in space correspond to projectors with one-dimensional image. Hence the Cayley plane can be defined as elements of (real) projectivization of $\mathcal{J}_3(\octo)$ of rank one. Now the rank for octonionic matrices is a bit tricky due to nonassociativity and requires the definition of Jordan cross product of these matrices. For details we refer to Jacobson's monograph \cite{Jacobson}. There one can also find a classification of orbits of the automorphism group of $\mathcal{J}_3(\octo)$ (which is $\F$) from which it follows that $\mathbb{OP}^2$ is a homogeneous space. (The isotropy subgroup is determined in for example in \cite{Harvey,Springer}.)  In fact, Jacobson's book \cite{Jacobson} treats octonionic algebras over  general field and hence we get the definition of the complex Cayley plane as well. This space is also of geometric interest, as it is an exceptional member of the Severi varieties -- the unique extremal varieties for secant defects. For details see \cite{Landsberg, Landsberg2}.

Now, let us consider the intersection of the complex Cayley plane with the hyperplane given by traceless matrices $\mathcal{J}_0 := \{ A \in \mathcal{J}_3(\mathbb{O_C})\, |\, \Tr A = 0 \}$. The resulting space is studied in \cite{Landsberg,Landsberg2}, where the authors call it the generic hyperplane section and denote it by $\plane$. It is a total space of a twistor fibration over the real Cayley plane (see \cite{Atiyah, Friedrich}). Because $\plane$ is a complex projective variety, the stabilizer is a parabolic subgroup $P$ of $\F$. It is well known that the Cartan geometry modeled on the pair $(\F,\P)$ is rigid, i.e. any regular normal Cartan geometry of this type is locally isomorphic to the homogeneous model. The real version of this pair corresponding to the group $\F^{(-20)}$ appears as a conformal infinity of the Einstein space $\mathbb{O}H^2$ \cite{Biquard}. The geometry obtained is called `octonionic-contact', because there is a naturally defined eight-dimensional maximally nonintegrable subbundle of the tangent bundle. The contact geometry in the classical sense (studied for example in \cite{K_clas1, K_clasp}) is also present among the homogeneous spaces of the group $\F$ -- namely the one whose isotropy group is the parabolic subgroup corresponding to the other `outer' simple root of the Lie algebra of $\mathfrak{f}_4$. 

The authors of \cite{Landsberg} state that the isomorphism $\plane = \F/\P$ is suggested by `geometric folding'. A rigorous proof of this isomorphism can be gleaned from \cite{Jacobson}. This proof however requires a lot of the theory of nonassociative algebras, most notably it needs Jordan coordanization theorem. Quite short proof can be given using the Borel fixed point theorem. In a hope to make $\plane$ more accesible to geometrically inclined audience, we present a \emph{constructive} proof of the transitivity of the action of $\F$ on $\plane$ based on the representation theory of complex spin groups. From the theory of nonassociative algebras only Artin's theorem is needed. Following the approach of \cite{Harvey}, we explicitly realize the spin groups $\Spin(9,\C)$ and $\Spin(8,\C)$ as subgroups of  $\F$ and we use the description of their actions to find the reduction of an arbitrary element to a previously chosen one. 

 After some necessary definitions in section two, we describe explicitly the presentations of $\Spin(9,\C)$ and $\Spin(8,\C)$ inside of $\EndC(\octo^2)\otimes_\R\C$ in section three. We also explicitly describe vector and spinor representations of $\Spin(9,\C)$ in such a way that their image is inside $\F$. Section three continues with the proof of the transitivity of the action of $\F$ on $\plane$. We conclude by dealing with the real case. In the last section we compute the stabilizer of a point.

We are thankful to Mark MacDonald who pointed out Jacobson's work. Also, the role of Svatopluk Kr\'ysl was indispensable.

\section{Notations and definitions}
   \subsection{Complexified octonions and the hyperplane section}
   For a comprehensive reference on octonionic algebras over any field we refer to \cite{Springer}.
   We denote by $\octo$ the octonionic algebra over the field of complex numbers. The complex-valued `norm' on $\octo$ is denoted by $N$. The algebra $\octo$ is normed ($N(ab)=N(a)N(b)$) but it fails to be a division ring, since $N$ is isotropic. This algebra is not associative. Nevertheless, it is alternative, which means that the trilinear form (called the associator) $ [u,v,w] \mapsto (uv)w-u(vw) $ is completely skew-symmetric. Later on we will use the so called \emph{Artin's theorem} which states that any subalgebra of an alternative algebra generated by two elements is associative. It follows that products involving only two variables can be written without parenthesis unambiguously.

   The symbol $L_u$ denotes the operator of left multiplication by $u$, i.e. $L_u (v) := uv$ for any $v\in\octo$. Note that $L_uL_v \neq L_{uv}$ in general due to the nonassociativity of octonionic algebras. 

   Since there is up to isomorphism only one octonionic algebra over $\C$ we can think of $\octo$ in the following way: $\octo = \mathbb{O_R\otimes C} = \mathbb{O_R\otimes_R C}$, where $\mathbb{O_R}$ is the classical real algebra of octonions (\cite{Baez}). The multiplication on this tensor product is canonically defined by
   \[
        (o_1\otimes z_1)(o_2\otimes z_2):=o_1o_2\otimes z_1z_2 \text{ for } o_1,\,o_2\in\mathbb{O},z_1,z_2\in\C
   \]
   and conjugation is given by $\overline{o\otimes z}:=\bar{o}\otimes z$.

   The multiplication of an arbitrary element $o\otimes z\in\octo$ by a complex number $w$ is understood in the sense of multiplication by element $1\otimes w$, i.e. $w(o\otimes z):=o\otimes(wz)$. We identify the elements of $\mathbb{R\otimes C}$ with complex numbers under the canonical isomorphism $r\otimes w \mapsto rw$, for $r\in\R$, $w\in \C$. The real and imaginary parts of $o\otimes z$ are defined to be $(\Re\, o )\otimes z$ and $(\Im\, o)\otimes z$ respectively.

   The complex valued quadratic form $N$ is given by
    \[
      N(o\otimes z):=o\bar{o}zz,\qquad o\in\mathbb{O},z\in\C.
   \]
   Following Springer \cite{Springer}, we denote by $\langle \cdot, \cdot \rangle$  the double of the bilinear form associated to $N$, $\langle x,y\rangle = N(x+y)-N(x)-N(y)$. An octonion $u\in\octo$ is pure imaginary if and only if $\langle u, 1\rangle =0$.

   For later use, we will record here several useful identities which holds in any octonionic algebra and whose proof can also be found in \cite{Springer}
   \begin{subequations}
   \begin{gather}
      \langle xy, z\rangle = \langle y,\bar{x}z\rangle\nonumber\\%1.3.2
      x(\bar{x}y) = N(x)y \label{eq:norma}\\  %1.3.3
%      \end{gathered}
%    & &
%      \begin{gathered}
      u(\bar{x}y) + x(\bar{u}y) = \langle u,x\rangle y  \label{eq:skal_souc}\\ % 1.3.3 str 8
      u(\bar{x}(uy)) = ((u\bar{x})u)y. \label{eq:mouf} % 1.4.1 str 9
%     \end{gathered}
   \end{gather}
\end{subequations}
   Due to nonassociativity of the algebras involved we need to make clear distinction between associative algebras of $\C$-linear endomorphisms, which we denote by~$\mathrm{End}$, and the possibly nonassociative algebras of $n\times n$ matrices with entries in some algebra $\mathbb{F}$ which are denoted by $M(n,\mathbb{F})$.

   The conjugation on $\octo$ naturally defines the conjugation on $M(n,\octo)$. The conjugate of an element $A \in M(n,\octo)$ is denoted by $\bar{A}$. The symbol $\Herm(n,\octo)$ stands for the set of $n \times n$ hermitian matrices over $\octo$, i.e.
   \[
      \Herm(n,\octo) = \{A \in M(n,\octo) | \bar{A}^T = A\}.
   \]
  We denote the subspace of trace-free matrices by lower index $\Herm_0(n,\octo)$. All tensor products in this article are taken over the real numbers.

   \emph{The complex exceptional Jordan algebra $\mathcal{J}_3(\octo)$} is the vector space $\Herm(3,\octo)$ endowed with the symmetric product $\circ:\Herm(3,\octo)\times \Herm(3,\octo)\to \Herm(3,\octo)$ defined by $A\circ B:= \frac{1}{2}(AB+BA)$. %We will denote the subspace of trace-free matrices by $\mathcal{J}_0$.

  Now we define the basic object of our interest.
  \begin{definition}
    The hyperplane section of the complex Cayley plane $\plane$ is the projectivization over $\C$ of the following subset of $\mathcal{J}_3(\octo)$
   \[
      \affOP:=\left\{ A\in \Herm(3,\,\octo) \middle|\; A^2=0,\; {\rm tr}A=0,\; A\neq 0\right\}.
   \]
  \end{definition}

   %The condition $A^2=0$ is equivalent to $\rank A = 1$ for trace-free matrices \cite{Landsberg}.

   \subsection{The spin groups}
   For an $n$-dimensional complex vector space $\mathbb{V}$ and a nondegenerate quadratic form $N$ on $\mathbb{V}$, we denote the corresponding Clifford algebra by $\Cliff(\mathbb{V},N)$ (our convention is $vv=-N(v)$).
   The spin group of $\Cliff(\mathbb{V},N)$ is denoted by $\Spin(\mathbb{V},N)$. It is generated inside $\Cliff(\mathbb{V},N)$ by products $uv$, $u,v\in \mathbb{V}$ where $N(u)=N(v)=1$. By $\Spin(n,\C)$ we denote the spin group associated to the standard quadratic form $\sum_{i=1}^n z_i^2$ on $\C^n$.

   For $w\in\C$ we define \emph{the generalized complex sphere}
   \[
      S^{n-1}(w) = \{ 0\neq z \in \mathbb{V}\, |\, N(z)=w^2 \}.
   \]
As a consequence of Witt's theorem we have
   \begin{lemma}\label{lem:vect_trans}
      The group $\Spin(n,\C)$ acts transitively via the vector representation on the generalized complex spheres.
   \end{lemma}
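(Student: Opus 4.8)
The plan is to reduce the statement to the classical transitivity of the orthogonal group and then to descend along the spin covering. Since over $\C$ all nondegenerate quadratic forms of a given dimension are equivalent, I identify $(\mathbb{V}, N)$ with $(\C^n, \sum_i z_i^2)$ and correspondingly $\Spin(\mathbb{V}, N)$ with $\Spin(n, \C)$, so that the vector representation is the natural action on $\mathbb{V}$. First I would fix two vectors $u, v \in S^{n-1}(w)$, so that $N(u) = N(v) = w^2$ with $u, v \neq 0$. The linear map sending $u$ to $v$ is an isometry between one-dimensional subspaces, and by Witt's extension theorem it extends to an isometry $g \in \mathrm{O}(\mathbb{V}, N)$ with $g(u) = v$. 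Thus $\mathrm{O}(n, \C)$ already acts transitively on $S^{n-1}(w)$.

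The vector representation of $\Spin(n, \C)$ has image the special orthogonal group $\mathrm{SO}(n, \C)$, so the next step is to arrange $\det g = 1$. If $\det g = -1$, I would correct it by composing with a reflection fixing $v$. When $w \neq 0$ the target $v$ is anisotropic, hence $v^\perp$ is a nondegenerate hyperplane; choosing any anisotropic $x \in v^\perp$ (which exists for $n \geq 2$), the reflection $s_x$ fixes $v$ and has $\det s_x = -1$, so that $s_x g \in \mathrm{SO}(n, \C)$ still maps $u$ to $v$. For the degenerate sphere $w = 0$ the same works because $v^\perp / \langle v \rangle$ is nondegenerate of positive dimension once $n \geq 3$, so an anisotropic $x \perp v$ is again available. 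Either way $\mathrm{SO}(n, \C)$ acts transitively on $S^{n-1}(w)$.

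It remains to lift from $\mathrm{SO}(n, \C)$ to $\Spin(n, \C)$. By the Cartan--Dieudonn\'e theorem every element of $\mathrm{SO}(n, \C)$ is a product of an even number of reflections; rescaling each reflecting vector to norm $1$ --- possible over $\C$ since every nonzero scalar is a square --- the corresponding product of unit vectors lies in $\Spin(n, \C)$ and acts through the vector representation by exactly that rotation. Hence the $\Spin(n, \C)$-orbit of $u$ equals its $\mathrm{SO}(n, \C)$-orbit, which by the previous step is all of $S^{n-1}(w)$.

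The step I expect to be most delicate is the determinant correction: one must guarantee that the stabilizer of the target vector contains an orientation-reversing isometry, which is where the difference between $\mathrm{O}$ and $\mathrm{SO}$ --- and therefore between the orthogonal and the spin group --- is genuinely felt. This is immediate in the anisotropic case and in the large-rank cases relevant to the sequel, but it is the one point requiring attention.
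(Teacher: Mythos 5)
Your proof is correct and follows exactly the route the paper intends: the paper offers no argument beyond the remark that the lemma is ``a consequence of Witt's theorem'', and your chain --- Witt extension, determinant correction by a reflection fixing the target vector, and the lift from $\mathrm{SO}(n,\C)$ to $\Spin(n,\C)$ via Cartan--Dieudonn\'e with reflecting vectors rescaled to unit norm --- supplies precisely the details that citation suppresses. Your attention to the isotropic case is warranted: for $w=0$ one does need $n\ge 3$ (for $n=2$ the group $\mathrm{SO}(2,\C)$ preserves each of the two isotropic lines, so the statement as literally quantified fails there), but this is harmless since the paper only ever applies the lemma with $n=9$.
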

% !!! REFERENCE !!!
%   \begin{proof}
%      The proof for nonisotropic vectors (i.e. $S^n(w)$ for $w\neq 0)$) is the same as in the real case -- i.e. Gramm-Schmidt orthogonalization process. Let $v\in S^n(0)$ be a nonzero isotropic vector. Since permutations of components is an orthogonal transformation, we can map $v$ by a special orthogonal transformation to a vector whose first coordinate is nonzero. The subgroup of $\mathrm{SO}(n,\C)$ fixing one direction is isomorphic to $\mathrm{SO}(n-1,\C)$; taking the stabilizer of $(1,0,\ldots,0)$ we see that any $v\in S^n(0)$ can be mapped to a vector of the form $(w,\imath w,0,\ldots,0)$, $w\neq 0$. The  matrix
%\[
%\frac{1}{2w}
% \begin{pmatrix}
%  1+w^2 & \imath(w^2-1)\\
%  \imath(1-w^2) & 1+w^2
% \end{pmatrix}
%\]
% is an element of $\mathrm{SO}(2,\C)$ which sends $(w,\imath w)$ to $(1,\imath)$.
%   \end{proof}
%
   \subsection[Lie algebra]{Complex Lie algebra $\mathfrak{f}_4$}
\label{liealgebra}
   The complex exceptional Lie group $\F$ can be defined as the automorphism group $\mathsf{Aut}(\mathcal J)$ of the complex exceptional Jordan algebra $(\mathcal{J}_3(\octo),\circ)$ (see \cite{Springer}). In other words $\F$ is the subgroup of $\mathrm{GL}(27,\C)$ such that $g\in\F$ if and only if $g(A\circ B) = gA \circ gB$ for every $A,B \in \Herm(3,\octo)$.

  The action of $\F$ preserves the trace on $\Herm(3,\octo)$. This can be easily proved using the  equality
      \[
       \Tr A = \frac{1}{3} \Tr (B \mapsto A\circ B).
      \]

      It is easy to verify that the action of $\mathrm{O}(3,\C)$ on $\Herm(3,\octo)$ given by
      \begin{equation*}\label{eq:so3}
         \mathrm{O}(3,\,\C)\ni g \longmapsto (A \mapsto gAg^T), \quad A\in\Herm(3,\octo)
      \end{equation*}
      defines an injective group homomorphism $\mathrm{O}(3,\C) \hookrightarrow \F$.

   Now we present basic facts about the complex simple Lie algebra $\mathfrak{f}_4$ of the group $\F$. We shall use these facts as well as the properties of the root system of the Lie algebra $\mathfrak{f}_4$ in the last section of this text. Details can be found in \cite{Bourbaki}.

There exist a choice of the Cartan subalgebra $\mathfrak{h}$ of $\mathfrak{f}_4$, an orthonormal 
(with respect to the Killing form of $\mathfrak{f}_4$) basis $\{\epsilon_i\}_{i=1}^4$ of $\mathfrak{h}^*$ 
and a choice of simple roots 
\[\Delta=\left\{\alpha_1=\epsilon_2-\epsilon_3,\;\alpha_2=\epsilon_3-\epsilon_4,\;\alpha_3=\epsilon_4,\;\alpha_4=\frac{1}{2}(\epsilon_1-\epsilon_2-\epsilon_3-\epsilon_4)\right\}.\]

In this convention the Dynkin diagram is
   \begin{center}
     \begin{tikzpicture}[koren/.style={circle,draw,fill=black,inner sep=0pt,minimum size=2mm}]
        \draw (0,0) -- (1,0);
        \draw (1,-.05) -- (2,-.05);
        \draw (1,.05) -- (2,.05);
        \draw (2,0) -- (3,0);
        \draw (1.4,.12) -- (1.6,0) -- (1.4,-.12);
        \node at (0,0) [koren,label=above:$\alpha_1$] {};
        \node at (1,0) [koren,label=above:$\alpha_2$] {};
        \node at (2,0) [koren,label=above:$\alpha_3$] {};
        \node at (3,0) [koren,label=above:$\alpha_4$] {};
     \end{tikzpicture}.
  \end{center}

The set $\Delta$ determines the set of positive roots $\Phi^+$ . 
%These are the roots that can be expressed as a nonnegative integral combination of $\Delta$.
%%Finally, let as define the weight $\delta:=1/2 \sum_{\alpha\in\Phi^+} \alpha$. 
%   The set $\Phi$  of all roots is then
%   \[ \Phi=\left\{\pm\epsilon_i, \pm\epsilon_i\pm\epsilon_j, \frac{1}{2}(\pm\epsilon_1\pm\epsilon_2\pm\epsilon_3\pm\epsilon_4), \quad i < j \ {\rm and} \ i,j=1,2,3,4 \right\}.\]
%
For any root $\alpha$, we define the coroot $H_\alpha\in\mathfrak{h}$ by 
$\lambda(H_{\alpha})=2\langle \lambda, \alpha\rangle/2\langle \alpha, \alpha \rangle$, where $\langle\;,\;\rangle$ is the Killing form.

   The fundamental weights $\{\varpi_i\}_{i=1}^4$  are defined as the dual basis to the simple coroots.
%
%   \[\varpi_1=\epsilon_1+\epsilon_2,\; \varpi_2=2\epsilon_1+\epsilon_2+\epsilon_3,\; \varpi_3=\frac{1}{2}(3\epsilon_1+\epsilon_2+\epsilon_3+\epsilon_4),\; \varpi_4=\epsilon_1.\]
%
%%   where the highest weight $\lambda = \sum_{i=1}^4 \alpha_i \varpi_i$. \marginpar{overit}
%
%
   We denote the irreducible representation of $\mathfrak{f}_4$ with the highest weight $\lambda$ by $\varrho_{\lambda}$. %\lambda je dominant integral
%A weight $\lambda$ is called integral and dominant, if it can be expressed as a nonnegative integral combination of the fundamental weights. Any irreducible finite dimensional representation of $\mathfrak{f}_4$ is isomorphic to some $\varrho_\lambda$ for 
%$\lambda$ integral and dominant.

\section[Action]{Action of $\F$ on $\affOP$}
   In this section we explicitely describe the group $\Spin(9,\C)$ as a multiplicative subgroup of $\EndC(\octo^2)\otimes\C$ and construct its representation on $\Herm(3,\octo)$. Using this representation, we prove that $\F$ acts transitively on the hyperplane section $\affOP$. 

   \subsection[Spin groups]{Realisation  of $\Spin(9,\C)$}
   First we need an auxiliary result concerning the Clifford algebra $\Cliff(\octo,N)$.
   \begin{lemma}\label{lem:clif8}
   
    The map $\mu: \octo \to \EndC(\octo^2)$ given by
      \[
            u   \longmapsto \begin{pmatrix}
                                    0 & L_u \\
                                    -L_{\bar{u}} & 0
                              \end{pmatrix}
      \]
      can be uniquely extended to the isomorphism of complex associative algebras $\Cliff(\octo,N)\simeq \EndC \left(\octo^2\right)$.       
   \end{lemma}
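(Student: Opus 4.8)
The plan is to invoke the universal property of the Clifford algebra, so that the only substantive tasks are to check that $\mu$ respects the defining Clifford relation and then to upgrade the resulting homomorphism to an isomorphism by a dimension count. Note first that each $\mu(u)$ genuinely lands in $\EndC(\octo^2)$: since the copy of $\C$ is central in $\octo$, left multiplication $L_u$ is $\C$-linear.

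The core computation is to verify the relation $\mu(u)^2 = -N(u)\,\Id_{\octo^2}$ demanded by the convention $vv = -N(v)$. Squaring the block matrix gives
\[
   \mu(u)^2 = \begin{pmatrix} -L_u L_{\bar u} & 0 \\ 0 & -L_{\bar u} L_u \end{pmatrix},
\]
the off-diagonal blocks vanishing automatically. Hence the claim reduces to $L_u L_{\bar u} = L_{\bar u} L_u = N(u)\,\Id$. The first equality is exactly the recorded identity \eqref{eq:norma}, namely $u(\bar u v)=N(u)v$; the second follows by applying the same identity with $\bar u$ in place of $u$, together with $N(\bar u)=N(u)$. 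Alternativity of $\octo$ (via Artin's theorem) makes these two-variable products unambiguous, so this step is routine given the identities already collected.

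With the Clifford relation established, the universal property produces a unique extension of $\mu$ to a homomorphism of complex associative algebras $\tilde\mu\colon \Cliff(\octo,N)\to\EndC(\octo^2)$, the uniqueness being automatic because $\octo$ generates $\Cliff(\octo,N)$. To see that $\tilde\mu$ is bijective I would compare dimensions and then use simplicity. Since $N$ is a nondegenerate quadratic form on the $8$-dimensional space $\octo$, one has $\dim_\C \Cliff(\octo,N)=2^8=256$, while $\dim_\C \EndC(\octo^2)=16^2=256$ as well. The Clifford algebra of a nondegenerate quadratic form of even dimension over $\C$ is simple, in fact $\Cliff(\octo,N)\cong M_{16}(\C)$; a homomorphism out of a simple algebra is either zero or injective, and $\tilde\mu$ is nonzero because $\mu(u)\neq 0$ for $u\neq 0$. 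Therefore $\tilde\mu$ is injective, and equality of dimensions forces it to be an isomorphism. The only real obstacle is getting the sign and conjugation bookkeeping in the Clifford relation right; once that is done, the passage from a relation-respecting map to an isomorphism is the standard argument via simplicity and dimension.
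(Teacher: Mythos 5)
Your proposal is correct and follows essentially the same route as the paper: both verify the Clifford relation $\mu(u)^2=-N(u)\,\Id$ using \eqref{eq:norma}, then invoke the universal property of $\Cliff(\octo,N)$ together with the simplicity of $\Cliff(8,\C)$ (plus the dimension count $2^8 = 16^2$) to conclude that the induced homomorphism is an isomorphism. The paper states this in two sentences; you have merely spelled out the details, all of which check out.
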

   \begin{proof}
   	Easy calculation and \eqref{eq:norma} shows that $\mu(u)\mu(u) = -N(u)\Id$. Using the universal property of Clifford algebras and the fact that the algebra  $\Cliff(8,\C)$ is simple (see \cite{GoodmanWallach}), we immediately get the result.
   \end{proof}

   Let $\mathbb{V}_9$ be the complex vector space $\C\oplus\octo$. We define the quadratic form $N'$ by $(r,u)\mapsto r^2+N(u)$. Let $\kappa: \V \to \EndC(\octo^2)\otimes\C$ be the homomorphism\footnote{The scalar multiplication on this complex algebra acts only on the first part of the tensor product, i.e. $w\cdot (A\otimes z) = (wA)\otimes z$ for $w,z\in \C$, $A\in \EndC(\octo^2)$.} given by
   \[
      \kappa:\,(r,u) \longmapsto \begin{pmatrix}
                           r & L_u \\
                           L_{\bar{u}} & -r
                        \end{pmatrix}\otimes\imath.
   \]

   \begin{proposition}	
    The Clifford algebra $\Cliff(\V, N')$ is isomorphic (as an associative algebra) to $\EndC(\octo^2)\otimes\C$.
   \end{proposition}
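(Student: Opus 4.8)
The plan is to use the universal property of the Clifford algebra $\Cliff(\V,N')$ to produce an algebra homomorphism into $\EndC(\octo^2)\otimes\C$ induced by $\kappa$, and then argue that it is an isomorphism by a dimension count combined with the simplicity of the target. First I would verify the Clifford relation: for $v=(r,u)\in\V$ the square $\kappa(v)^2$ must equal $-N'(v)\Id$. Computing $\kappa(r,u)^2$ amounts to squaring the $2\times2$ block matrix $\begin{smatrix} r & L_u \\ L_{\bar u} & -r \end{smatrix}$ and then using that the tensor factor $\imath$ contributes $\imath^2=-1$. The diagonal blocks of the squared block matrix are $r^2\Id + L_uL_{\bar u}$ and $L_{\bar u}L_u + r^2\Id$, while the off-diagonal blocks are $rL_u - L_u r = 0$ and symmetrically $0$. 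By the identity \eqref{eq:norma}, $x(\bar x y)=N(x)y$, i.e.\ $L_xL_{\bar x}=N(x)\Id$, so $L_uL_{\bar u}=L_{\bar u}L_u=N(u)\Id$ (the second by substituting $\bar u$ for $x$, using $N(\bar u)=N(u)$). Hence the squared block matrix is $(r^2+N(u))\Id$, and after multiplying by $\imath^2=-1$ we obtain $\kappa(v)^2 = -(r^2+N(u))\Id = -N'(v)\Id$, as required.

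By the universal property this extends uniquely to an algebra homomorphism $\hat\kappa:\Cliff(\V,N')\to\EndC(\octo^2)\otimes\C$. To see it is an isomorphism I would compare dimensions and invoke simplicity. Here $\V$ is nine-dimensional over $\C$, so $\dim_\C \Cliff(\V,N')=2^9=512$; and since $\EndC(\octo^2)\cong M(16,\C)$ has complex dimension $256$, its tensor product with $\C$ over $\R$ has $\C$-dimension $2\cdot256=512$ as well. Since the quadratic form $N'$ is nondegenerate, $\Cliff(\V,N')=\Cliff(9,\C)$ is semisimple; in odd dimension it is isomorphic to $M(16,\C)\oplus M(16,\C)$, so it has exactly two inequivalent irreducible representations, each of dimension $16$. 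The target $\EndC(\octo^2)\otimes\C\cong M(16,\C)\oplus M(16,\C)$ as well (the extra $\R$-tensor factor of $\C$ splits the complex matrix algebra into two copies), so source and target are abstractly isomorphic algebras of the same dimension.

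The main obstacle is ruling out that $\hat\kappa$ collapses onto a single matrix factor rather than hitting both. A surjectivity or injectivity argument must show that the two "halves" of the even-dimensional picture are genuinely separated by $\kappa$. The cleanest route is to exploit the already-established Lemma~\ref{lem:clif8}: the even Clifford subalgebra $\Cliff^0(\V,N')$ is naturally identified with $\Cliff(\octo,N)\cong\EndC(\octo^2)$ via $\mu$, and one checks that $\kappa(1,0)=\begin{smatrix}1&0\\0&-1\end{smatrix}\otimes\imath$ together with $\kappa(0,u)=\begin{smatrix}0&L_u\\L_{\bar u}&0\end{smatrix}\otimes\imath$ generate, through products of pairs, exactly the operators $\mu(u)$ (up to the scalar $\imath^2=-1$) spanning the first tensor factor $\EndC(\octo^2)\otimes 1$, while the single generator $\kappa(1,0)$ supplies the extra tensor factor $\imath$ needed to reach $\EndC(\octo^2)\otimes\C$. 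Thus the image contains both $\EndC(\octo^2)\otimes 1$ and $\EndC(\octo^2)\otimes\imath$, hence all of $\EndC(\octo^2)\otimes\C$, so $\hat\kappa$ is surjective; equality of dimensions then forces it to be an isomorphism.
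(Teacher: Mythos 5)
Your proof is correct and takes essentially the same route as the paper: verify the Clifford relation $\kappa(v)^2=-N'(v)\Id$, extend via the universal property, observe that products such as $\kappa(-1,0)\kappa(0,u)=\mu(u)\otimes 1$ generate $\EndC(\octo^2)\otimes 1$ and that $\kappa(1,0)$ then yields the $\imath$-component, and conclude by equality of dimensions. The paper phrases the same argument through a commutative diagram using $\Cliff(\V,N')\simeq M(16,\C)\oplus M(16,\C)$, but the substance is identical.
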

   \begin{proof}
      It is known (see e.g. \cite{GoodmanWallach}) that $\Cliff(\V,N')\simeq M(16,\C)\oplus M(16,\C)$.
      Calculation and \eqref{eq:norma} shows that $\kappa(r,u)\kappa(r,u)= -N'(r,u) \Id$. The universal mapping property of Clifford algebras gives us the following commutative diagram
      \begin{center}
    \ \hbox{
    \xymatrix{
        \V \ar[dr]_(.4){\kappa} \ar[r]^(.28){i} & M(16,\,\C)\oplus M(16,\C)\ar[d]^(.45){f} \\
    & \EndC(\octo^2)\otimes\C\ .
    }}
      \end{center}
    Because  $\kappa(-1,0)\kappa(0,u) = \mu(u)\otimes 1$, we see that the image of $f$ generates the subalgebra $\EndC(\octo^2)\otimes 1$.  The equality 
\[
 \begin{pmatrix} A &B\\C &D\end{pmatrix}\otimes \imath =\begin{pmatrix} 1 & 0 \\ 0 &-1\end{pmatrix}\otimes \imath \cdot \begin{pmatrix} A & B\\-C & -D \end{pmatrix} \otimes 1
\]
implies that the image of $f$ generates the whole algebra $\EndC(\octo^2)\otimes \C$. Since the dimensions of the considered alegbras are the same, it follows that $f$ is an isomorphism.
%       Since  $\dim\EndC(\octo^2)\otimes\C = 512 = \dim M(16,\C)\oplus M(16,\C)$, it is sufficient to prove that the associative algebra morphism $f$ is surjective. From the theory of semisimple associative algebras (see \cite{GoodmanWallach}) follows that the kernel of $f$ (which is clearly not a trivial mapping) is either $0$ or isomorphic to $M(16,\C)$. 
%       
%       Because $\kappa(1,0)\kappa(0,u) = \mu(u)\otimes 1$ we know that the image of $\kappa$ generates at least the subalgebra $\EndC(\octo^2)\otimes 1$ and therefore the dimension of the image of $f$ is at least $256+9$. 
   \end{proof}

   \begin{lemma}
      The spin group $\Spin(\V,N')$ is generated (inside $\EndC(\octo^2)\otimes\C$) by elements of the form
      \begin{equation*}\label{eq:spin9gen}
         g_{r,u} := \begin{pmatrix}
            r & -L_u \\
            L_{\bar{u}} & r
         \end{pmatrix}\otimes 1, \quad r \in \mathbb{C},\ u \in \octo, \quad r^2 + u\bar{u} = 1
      \end{equation*}
   \end{lemma}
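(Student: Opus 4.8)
The plan is to use the identification $\Cliff(\V,N')\simeq\EndC(\octo^2)\otimes\C$ from the preceding proposition, under which a unit vector $(r,u)\in\V$ (i.e.\ $N'(r,u)=r^2+u\bar u=1$) corresponds to $\kappa(r,u)$. By the definition of the spin group recalled in Section~2, $\Spin(\V,N')$ is generated by all products $\kappa(v_1)\kappa(v_2)$ of two such unit vectors. I would therefore prove the lemma in two steps: first show that the products in which one factor is the fixed standard unit vector $e=(1,0)$ are \emph{exactly} the elements $g_{r,u}$, and then show that these special products already generate every product of two unit vectors.

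For the first step I would simply multiply the explicit matrices, using that multiplication in the second tensor slot contributes $\imath^2=-1$, i.e.\ $(A\otimes\imath)(B\otimes\imath)=AB\otimes(-1)$. A direct computation gives
\[
\kappa(1,0)\,\kappa(r,u)=\begin{pmatrix} -r & -L_u \\ L_{\bar u} & -r \end{pmatrix}\otimes 1 = g_{-r,u},
\]
so that $g_{r,u}=\kappa(1,0)\kappa(-r,u)$, and the normalisation $N'(-r,u)=r^2+u\bar u=1$ matches precisely the constraint imposed on $g_{r,u}$. In particular every $g_{r,u}$ is a product of two unit vectors, hence lies in $\Spin(\V,N')$; note also that $-\Id=g_{-1,0}$ is itself one of the listed generators, a fact I will need below.

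For the second step I would use that $e$ is a unit vector, so by $\kappa(e)\kappa(e)=-N'(e)\Id=-\Id$ one has $\kappa(e)^{-1}=-\kappa(e)$. Inserting $\Id=-\kappa(e)\kappa(e)$ into an arbitrary product and regrouping yields
\[
\kappa(v_1)\kappa(v_2) = -\bigl(\kappa(v_1)\kappa(e)\bigr)\bigl(\kappa(e)\kappa(v_2)\bigr),
\]
where each parenthesised factor is, by the first step (together with the analogous computation of $\kappa(r,u)\kappa(1,0)$), one of the $g$'s, and the leading sign is absorbed by $-\Id=g_{-1,0}$. Thus every generator $\kappa(v_1)\kappa(v_2)$ of $\Spin(\V,N')$ lies in the subgroup generated by the $g_{r,u}$, which together with the first step gives the claimed equality.

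The only real obstacle is bookkeeping: one must track the two competing sign sources — the Clifford convention $vv=-N(v)$ and the factor $\imath^2=-1$ from the second tensor component — and verify that after them the normalisation condition is exactly $r^2+u\bar u=1$ and that the stray $-\Id$ is itself realised as $g_{-1,0}$. Beyond these signs the computations are routine $2\times 2$ matrix products, the only nontrivial ingredient being the already-established identity $\kappa(r,u)^2=-N'(r,u)\Id$ (which rests on $L_uL_{\bar u}=N(u)\Id$, equation~\eqref{eq:norma}).
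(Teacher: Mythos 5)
Your proposal is correct and follows essentially the same route as the paper: the paper writes $g_{r,u}=\kappa(r,u)\kappa(-1,0)$ and then factors an arbitrary generator as $\kappa(r,u)\kappa(s,v)=g_{r,u}g_{-s,v}$, which is exactly your idea of pairing each unit vector with a fixed unit vector $\pm(1,0)$ and absorbing the resulting sign into $g_{-1,0}=-\Id$. The only cosmetic difference is that you multiply by $\kappa(1,0)$ on the left (yielding a three-factor decomposition), while the paper multiplies by $\kappa(-1,0)$ on the right and gets a two-factor identity directly; the sign bookkeeping you flag checks out in both versions.
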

   \begin{proof}
      The spin group is by definition generated by products of the form\linebreak[4] $\kappa(r,u)\kappa(s,v)$, where $N'(r,u)=N'(s,v)=1$. Since $g_{r,u} = \kappa(r,u)\kappa(-1,0)$ and $\kappa(r,u)\kappa(s,v) = g_{r,u}g_{-s,v}$, the lemma follows.
   \end{proof}

  For brevity we will identify $A\otimes 1 \in \EndC(\octo^2)\otimes \C$  with $A\in\EndC(\octo^2)$ from now on; i.e. $g_{r,u}=\begin{smatrix} r & -L_u \\
            L_{\bar{u}} & r\end{smatrix}$.

\subsection[Representations of spin 9]{Representations of $\Spin(\V,N')$}

   We will use the following decomposition of $\Herm(3,\octo)$
   \begin{equation*}
         \begin{pmatrix}
         r_1 & \bar{x}_1 & \bar{x}_2 \\
         x_1 & r_2 & x_3 \\
         x_2 & \bar{x}_3 & r_3
      \end{pmatrix} = \begin{pmatrix}r_1 &0&0\\0&0&0\\0&0&0\end{pmatrix} +
      \begin{pmatrix}
         0 & \bar{x}_1 &\bar{x}_2\\x_1&0&0\\x_2&0&0
      \end{pmatrix} +
      \begin{pmatrix}
         0&0&0\\
         0&s&x_3\\
         0&\bar{x}_3&-s
      \end{pmatrix}+
      \begin{pmatrix}
       0 &0 &0\\
       0 &t&0\\
       0&0&t
      \end{pmatrix}
   \end{equation*}
   in order to define the action of $\Spin(\V,N')$ on it. In other words -- we take the $\C$-linear isomorphism $\Herm(3,\octo)\to %\C\oplus\octo^2\oplus(\C\oplus\octo)\oplus\C$.
                        \C\oplus\octo^2\oplus\Herm_0(2,\octo)\oplus\C$ and we endow each of the spaces in the decomposition with an action of $\Spin(\V,N')$. The $\octo^2$ summand will be \emph{the spinor part} and we will call the $\Herm(2,\octo)_0$ summand \emph{the vector part}.

  \begin{lemma}\label{lem:rep}
  Let $\Phi$ be the linear isomorphism between the space of trace-free hermitian matrices $\Herm_0(2,\octo)$ and $\kappa(\V)$ defined by
  \[
  \Phi: \begin{pmatrix}
                    s & x \\
               \bar{x} & -s
           \end{pmatrix} \mapsto \begin{pmatrix}
                                        s & L_x \\
                                     L_{\bar{x}} & -s
                                 \end{pmatrix}\otimes\imath
  \]
  and let $\varrho_V$ be the vector representation of $\Spin(\V,N')$. 

  If we define the representation of $\Spin(\V,N')$ on $\Herm_0(2,\octo)$ by \(\xi_V(g)a := \Phi^{-1}\bigl(\varrho_V(g)\Phi(a) \bigr)\),
  the following formula holds for the generators $g_{r,u}$ of $\Spin(\V,N')$
  \begin{align}
    \xi_V(g_{r,u}) \begin{pmatrix}
                    s & x \\
               \bar{x} & -s
           \end{pmatrix} & = \left[ \begin{pmatrix} r & -u \\ \bar{u} & r\end{pmatrix}
				  \begin{pmatrix}
					    s & x \\
				      \bar{x} & -s
				  \end{pmatrix}\right]
				  \begin{pmatrix} r & u \\ -\bar{u} & r\end{pmatrix} \nonumber\\
			& = \begin{pmatrix}
			      s\bigl(r^2-N(u)\bigr)-r\langle x,u \rangle & 2rsu + r^2x - u\bar{x}u \\
            2rs\bar{u} + r^2\bar{x} - \bar{u}x\bar{u} & -s\bigl(r^2-N(u)\bigr)+r\langle \bar{x},\bar{u} \rangle
			    \end{pmatrix}. \label{eq:spin9vect}
  \end{align}
  \end{lemma}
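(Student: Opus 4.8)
The plan is to use that, under the Clifford--algebra realisation, the vector representation is nothing but the (untwisted, since $\Spin$ lies in the even part) adjoint action by conjugation: for a unit vector one has $\kappa(v)^{-1}=-\kappa(v)$ because $\kappa(v)^2=-N'(v)\Id=-\Id$, so every element of $\Spin(\V,N')$ acts on $\kappa(\V)$ by $\varrho_V(g)\kappa(v)=g\,\kappa(v)\,g^{-1}$. Since $\Phi(a)=\kappa(s,x)$ and $\xi_V=\Phi^{-1}\circ\varrho_V\circ\Phi$, the whole statement reduces to computing the conjugate $g_{r,u}\,\kappa(s,x)\,g_{r,u}^{-1}$ and recognising it again as $\kappa$ of a trace-free hermitian matrix. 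First I would record the inverse; from $g_{r,u}=\kappa(r,u)\kappa(-1,0)$ and $\kappa(v)^{-1}=-\kappa(v)$ (or by a one-line check with \eqref{eq:norma}) one gets
\[
 g_{r,u}^{-1}=\begin{pmatrix} r & L_u\\ -L_{\bar u} & r\end{pmatrix}.
\]

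Next I would simply multiply out the triple product of $2\times2$ operator matrices $g_{r,u}\,\kappa(s,x)\,g_{r,u}^{-1}$. Each matrix entry becomes a $\C$-linear combination of compositions $L_aL_b$ and $L_aL_bL_c$ of left multiplications, and the point is that the identities recorded in Section~2 collapse exactly the combinations that occur. Concretely, \eqref{eq:norma} gives $L_uL_{\bar u}=L_{\bar u}L_u=N(u)\Id$; the polarisation \eqref{eq:skal_souc} gives $L_uL_{\bar x}+L_xL_{\bar u}=\langle u,x\rangle\Id$ together with its conjugate $L_{\bar u}L_x+L_{\bar x}L_u=\langle\bar u,\bar x\rangle\Id$; and the Moufang identity \eqref{eq:mouf} gives the crucial triple collapse $L_uL_{\bar x}L_u=L_{(u\bar x)u}$, as well as $L_{\bar u}L_xL_{\bar u}=L_{(\bar u x)\bar u}$. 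After these substitutions each diagonal entry is a scalar multiple of $\Id$ and each off-diagonal entry is a single left multiplication, so the result is visibly of the form $\kappa(s',x')$ with
\[
 s'=s\bigl(r^2-N(u)\bigr)-r\langle x,u\rangle,\qquad x'=2rsu+r^2x-(u\bar x)u ,
\]
the last term being unambiguous by Artin's theorem.

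Applying $\Phi^{-1}$ then yields $\xi_V(g_{r,u})\begin{smatrix} s&x\\\bar x&-s\end{smatrix}=\begin{smatrix} s'&x'\\\bar{x'}&-s'\end{smatrix}$, which is exactly the explicit right-hand side of \eqref{eq:spin9vect}; here the two lower entries are matched using $\overline{(u\bar x)u}=(\bar u x)\bar u$ and the fact that $\langle x,u\rangle$ is a scalar, so $\langle\bar x,\bar u\rangle=\langle x,u\rangle$. It then remains only to justify the middle expression in the statement, i.e. that this same matrix equals the octonionic product $\bigl[\begin{smatrix} r&-u\\\bar u&r\end{smatrix}\begin{smatrix} s&x\\\bar x&-s\end{smatrix}\bigr]\begin{smatrix} r&u\\-\bar u&r\end{smatrix}$ with the indicated bracketing; this is a direct expansion of octonionic $2\times2$ matrix products using only $u\bar u=N(u)$ and $u\bar x+x\bar u=\langle x,u\rangle$, and it reproduces the same entries $s'$ and $x'$.

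The one genuine obstacle is nonassociativity: because $L_aL_b\neq L_{ab}$ in general, the operator conjugation does not turn into the octonionic matrix product by any formal substitution, and a priori the off-diagonal entries could fail to be single left multiplications. What saves the computation is that the only triple composition appearing is of the special shape $L_uL_{\bar x}L_u$ governed by the Moufang identity \eqref{eq:mouf}, while the remaining double compositions pair up into scalars through \eqref{eq:norma} and \eqref{eq:skal_souc}. Getting this bracketing and these reductions right is the entire content of the lemma; everything else is bookkeeping.
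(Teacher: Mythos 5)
Your proposal is correct and follows essentially the same route as the paper: identify $\varrho_V$ with conjugation $v\mapsto g v g^{-1}$ on $\kappa(\V)$, compute $g_{r,u}^{-1}$ (the paper writes it as $g_{r,-u}$, which is your matrix), expand the operator triple product, and collapse the entries using \eqref{eq:norma}, \eqref{eq:skal_souc} and the Moufang identity \eqref{eq:mouf} before transporting back via $\Phi^{-1}$. Your additional verification of the middle (octonionic matrix product) expression and of the conjugate lower entries is detail the paper leaves implicit, but it is the same argument.
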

  \begin{proof}
      The vector representation of $\Spin(\V,N')$ is given by $v \mapsto gvg^{-1}$ where $v$ is an element of $\kappa(\V)$ and $g \in \Spin(\V,N')$. For $g_{r,u} = \kappa(r,u)\kappa(-1,0)$ we get $g_{r,u}^{-1} =  g_{r,-u}$.

      Thus we have the following formula for $\rho_V(g_{r,u})$  evaluated on
      $v=\begin{smatrix} s &L_x\\ L_{\bar{x}} & -s \end{smatrix} \otimes\imath$
      \[
         \begin{pmatrix}
            s\bigl(r^2-N(u)\bigr)-r(L_{u}L_{\bar{x}}+L_{x}L_{\bar{u}}) & 2rsL_u + r^2L_x - L_uL_{\bar{x}}L_u \\
            2rsL_{\bar{u}} + r^2L_{\bar{x}} - L_{\bar{u}}L_xL_{\bar{u}} & -s\bigl(r^2-N(u)\bigr)+r(L_{\bar{u}}L_{x}+L_{\bar{x}}L_{u})
         \end{pmatrix}\otimes \imath.
      \]
     From \eqref{eq:skal_souc} we have $L_{u}L_{\bar{x}}+L_{x}L_{\bar{u}} = 2L_{\langle x,u\rangle}$. With the help of the first Moufang identity \eqref{eq:mouf} we may substitute $L_{u}L_{\bar{x}}L_{u} = L_{(u\bar{x})u}$. Applying the isomorphism $\Phi$ to the result gives the expression for $\xi_V(g_{r,u})\Phi^{-1}(v)$ which agrees with \eqref{eq:spin9vect}. 
   \end{proof}

   The spinor representation of $\Spin(\V,N')$ acts on $\octo^2$ by (see chapter 6 of \cite{GoodmanWallach} for details)
   \[
      \xi_S(g_{r,u})(x_1,x_2) = \begin{pmatrix} r&-L_u\\L_{\bar{u}}&r\end{pmatrix}\begin{pmatrix}x_1\\x_2\end{pmatrix}=
      \begin{pmatrix}
       rx_1-ux_2\\\bar{u}x_1+rx_2
      \end{pmatrix}.
   \]
  %It is a matter of simple calculation to verify that \[\xi_S(g_{s,v})\left(\xi_S(g_{r,u})(x_1,x_2)\right) = \xi_S(g_{s,v}g_{r,u})(x_1,x_2).\]

% \begin{remark}
% One can check by direct computation that the spinor action preserves the quadratic form $N(x_1)+N(x_2)$. However the action of $\Spin(\V,N')$ is not transitive on $S^{15}(0)$ as $S^7(0)\times S^7(0)$ is an invariant subspace. This follows by direct calculation with the help of the fact that the associator is pure imaginary.
% \end{remark}

   We let the $\Spin(\V,N')$ act on the rest of the summands of $\Herm(3,\octo)$ trivially and denote the resulting action by $\xi$. %Thus we have $\xi=\xi_V+\xi_S+\Id+\Id$.

   \begin{proposition}
      The representation $\xi$ is faithfull and preserves the Jordan product. In other words $\Spin(\V,N') \simeq \mathrm{Im}(\xi)$ is a subgroup of $\F$.
   \end{proposition}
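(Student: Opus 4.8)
The plan is to prove the two assertions separately. Recall that $\F$ is defined as $\Aut(\mathcal{J}_3(\octo),\circ)$, so the inclusion $\mathrm{Im}(\xi)\subseteq\F$ is equivalent to the statement that every $\xi(g)$ is an automorphism of the Jordan product, i.e. $\xi(g)(A\circ B)=\xi(g)A\circ\xi(g)B$ for all $A,B\in\Herm(3,\octo)$; together with faithfulness this yields the group isomorphism $\Spin(\V,N')\simeq\mathrm{Im}(\xi)\leq\F$.

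Faithfulness is the easy half. If $\xi(g)=\Id$ then in particular $g$ acts trivially on the spinor summand $\octo^2$, so $\xi_S(g)=\Id$. Since the spinor representation of $\Spin(9,\C)$ on $\octo^2\cong\C^{16}$ is faithful — the nontrivial central element acts as $-\Id$ on spinors — we conclude $g=1$. Hence $\Ker\xi\subseteq\Ker\xi_S=\{1\}$.

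For the preservation of the Jordan product I would first reduce the problem twice. The set $H:=\{g\in\Spin(\V,N')\mid\xi(g)\in\F\}$ is a subgroup (if $\xi(g_1),\xi(g_2)$ preserve $\circ$, so does $\xi(g_1g_2)=\xi(g_1)\xi(g_2)$), so by the lemma describing the generators it suffices to prove $g_{r,u}\in H$ for all admissible $r,u$. Moreover, since $\circ$ is commutative and we work over $\C$, a linear map preserves $\circ$ as soon as it preserves squares: polarising $A\circ A$ recovers $A\circ B=\tfrac12\bigl((A+B)\circ(A+B)-A\circ A-B\circ B\bigr)$. Thus the whole statement reduces to the single identity $\xi(g_{r,u})(A^2)=\bigl(\xi(g_{r,u})A\bigr)^2$ for every $A\in\Herm(3,\octo)$.

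To finish one expands $A^2$ in the decomposition $\C\oplus\octo^2\oplus\Herm_0(2,\octo)\oplus\C$ and matches each component. Writing $A=A_0+A_S+A_V+A_t$ for the four summands, the square $A^2$ splits into the pure terms $A_S^2,A_V^2,\dots$ and the mixed terms $A_0\circ A_S,\ A_S\circ A_V,\dots$, and one checks that each transforms under $\xi(g_{r,u})$ exactly as prescribed by $\xi_S$ on the spinor part, by the formula \eqref{eq:spin9vect} on the vector part, and trivially on the two scalar parts. The easy terms reduce to the constraint $r^2+N(u)=1$ and to $\langle ab,c\rangle=\langle b,\bar ac\rangle$; for instance the $r_1$-component of $A_S^2$ equals $N(x_1)+N(x_2)$, and its invariance follows because the cross terms in $N(rx_1-ux_2)+N(\bar ux_1+rx_2)$ cancel by \eqref{eq:skal_souc}. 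The main obstacle is the genuinely nonassociative content hidden in the spinor square and in the spinor–vector cross term: verifying that the quadratic map sending the spinor $(x_1,x_2)$ to the vector part of $A_S^2$ is $\Spin(\V,N')$-equivariant. This is exactly the compatibility between the spinor and vector representations, and it is handled by the first Moufang identity \eqref{eq:mouf} (already responsible for the term $L_uL_{\bar x}L_u=L_{(u\bar x)u}$ in \eqref{eq:spin9vect}) together with \eqref{eq:norma}; Artin's theorem guarantees that the two-generated subexpressions occurring along the way may be manipulated associatively.
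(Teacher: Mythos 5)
Your proposal is correct, and its core argument takes a genuinely different route from the paper's. You share the paper's opening reductions exactly — faithfulness via the spinor summand, restriction to the generators $g_{r,u}$, and polarization reducing preservation of $\circ$ to the single identity $\xi(g_{r,u})(A^2)=\bigl(\xi(g_{r,u})A\bigr)^2$ — but at that point the paper does something else: it observes that $\xi(g_{r,u})$ coincides with the (bracket-unambiguous) conjugation $A\mapsto G_{r,u}AG_{r,u}^{-1}$ by the octonionic matrix $G_{r,u}=\begin{smatrix}1&0&0\\0&r&-u\\0&\bar{u}&r\end{smatrix}\in\Herm(3,\octo)$, and then proves $(GAG^{-1})(GAG^{-1})=GA^2G^{-1}$ in one stroke by applying Artin's theorem entrywise: all entries of $G$, $G^{-1}$ lie in the span of $\{r,u,\bar{u}\}$, so the two-variable identity $(u_1au_2)(u_3au_4)=u_1(a(u_2u_3)a)u_4$ and its polarization in $a$ legitimize every bracket rearrangement simultaneously, for all components of $\Herm(3,\octo)$ at once. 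Your component-wise matching in $\C\oplus\octo^2\oplus\Herm_0(2,\octo)\oplus\C$ avoids introducing $G_{r,u}$ (and the verification that conjugation by it reproduces $\xi$), and it makes the representation theory visible: invariance of the spinor norm $N(x_1)+N(x_2)$, invariance of $s^2+N(x_3)$, and equivariance of the Clifford pairing and of the quadratic map from spinors to vectors. The price is more case-by-case computation, which you only sketch; I checked that it does go through with the tools you name, so this is a sound alternative rather than a gap.

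Two refinements would tighten your sketch. First, the spinor component of $A^2$ (the spinor--vector cross term $(sx_1+x_3x_2,\ \bar{x}_3x_1-sx_2)$) needs no Moufang identity at all: it is formal, because the vector representation is defined by $\varrho_V(g)v=gvg^{-1}$ inside $\EndC(\octo^2)\otimes\C$ and the pairing is just the module action, so $(gvg^{-1})(g\psi)=g(v\psi)$. Second, the genuinely nonassociative step — the off-diagonal entry of the vector component of the spinor square — requires $(ux_2)(\bar{x}_1u)=u(x_2\bar{x}_1)u$, which is the \emph{middle} Moufang identity, not literally \eqref{eq:mouf}; in an alternative algebra it follows from the first one, but that derivation (or a citation) must be supplied, since Artin's theorem alone cannot yield an identity in three independent variables. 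Likewise, the cancellation of the cross terms in $N(rx_1-ux_2)+N(\bar{u}x_1+rx_2)$ uses the adjoint identity $\langle xy,z\rangle=\langle y,\bar{x}z\rangle$ rather than \eqref{eq:skal_souc}.
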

   \begin{proof}
      Since the spinor representation $\xi_S$ is faithfull, the representation $\xi$ is faithfull as well. In order to prove that this action preserves the Jordan product we introduce the following three by three hermitian matrix
    \[
     G_{r,u} = \begin{pmatrix}
          1 & 0 & 0\\
          0 & r & -u \\
          0 & \bar{u} & r
         \end{pmatrix}\in\Herm(3,\octo), 
    \]
    where $(r,u) \in \V$ is of unit norm. Straightforward calculations reveal that $G_{r,u}^{-1} = G_{r,-u}$ and that  $G_{r,u}AG_{r,u}^{-1}$ gives the expression for the action of $\xi(g_{r,u})$ on $A$. Moreover the expression \(G_{r,u}AG_{r,u}^{-1}\) is unambiguous for any $A\in \Herm(3,\octo)$.

    Put $g=g_{r,u}$, $G=G_{r,u}$ for simplicity. For each  $A\in\Herm(3,\octo)$ we have
      \[
         (\xi(g)A)(\xi(g)A)=(GAG^{-1})(GAG^{-1}).
      \] Let's suppose for a moment that $(GAG^{-1})(GAG^{-1}) = G(A(G^{-1}G)A)G^{-1}$. Then we would have 
      \begin{align}\label{eq:ga2}
            (\xi(g)A)(\xi(g)A)=\xi(g)(A^2)
      \end{align} for any $A\in \Herm(3,\octo)$.
    Using this equality for $A+B$ instead of $A$ we would get on the left hand side
      \begin{multline*}
         \left(\xi(g)(A+B)\right)\left(\xi(g)(A+B)\right)  = \big(\xi(g)A+\xi(g)B\big)\big(\xi(g)A+\xi(g)B\big)=\\
             = (\xi(g)A)^2 +(\xi(g)A)(\xi(g)B)+(\xi(g)B)(\xi(g)A)+(\xi(g)B)^2,
      \end{multline*}
      while the right hand side would equal
      \begin{align*}
       \xi(g)\left((A+B)^2\right) & = \xi(g)(A^2)+\xi(g)(AB)+\xi(g)(BA)+\xi(g)(B^2).
      \end{align*}
      Using \eqref{eq:ga2}  for $\xi(g)(A^2)$ and $\xi(g)(B^2)$ we would get that
      \[
         (\xi(g)A)(\xi(g)B)+(\xi(g)B)(\xi(g)A)= \xi(g)(AB+BA).
      \]
    
      Hence we only need to prove that we can rearrange the brackets in the expression $(GAG^{-1})(GAG^{-1}) $. From the  Artin's theorem follows that \[(u_1au_2)(u_3au_4) = u_1(a(u_2u_3)a)u_4,\] where $u_i$ are elements of the linear span of $\{r,u,\bar{u}\}$ and $a\in \octo$ is arbitrary. Using the same trick as above and writing this equality for $a+b$ instead of $a$ we get
      \[
       (u_1au_2)(u_3bu_4)+(u_1bu_2)(u_3au_4) = u_1(a(u_2u_3)b)u_4+u_1(b(u_2u_3)a)u_4.
      \] 
%For a hermitian matrix $A$ this implies the desired result.
The equation
$$
((GAG^{-1})(GAG^{-1}))_{uv}=$$
$$
=\frac{1}{2}\sum_{i,j,\ldots ,m} (G_{u,i}A_{i,j}G^{-1}_{j,k})(G_{k,l}A_{l,m}G^{-1}_{m,v})+
(G_{u,l}A_{l,m}G^{-1}_{m,k})(G_{k,i}A_{i,j}G^{-1}_{j,v})
$$
and the fact that $G_{i,j}$ are from the linear span of $\{r,u,\bar{u}\}$ imply
$$(GAG^{-1})(GAG^{-1})=G(A(G^{-1}G)A)G^{-1}=GA^2 G^{-1}.$$
   \end{proof}

\begin{remark}
One could define the representation $\xi$ directly using the matrix $G_{r,u}$. It is however not clear that the expression $G_{r,u}AG^{-1}_{r,u}$ defines a representation due to the nonassociativity of the product of $\Herm(3,\octo)$.
\end{remark}

\subsection[The subgroup Spin(8,C)]{The subgroup $\Spin(8,\C)$}\label{ss:spin8}
%    \begin{remark}\label{rem:spin8}
	The usual  presentation of spin groups gives (see lemma \ref{lem:clif8}) the following set of generators of $\Spin(\octo,N)$
    \begin{equation*}\label{eq:spin8}
    %\Spin(\octo)=
            \left\{
            \begin{pmatrix}
                L_uL_{\bar{v}} & 0 \\
                0 & L_{\bar{u}}L_v
            \end{pmatrix} \Big|\; u,v\in\octo,\; N(u)=N(v)=1
            \right\}.
    \end{equation*}
      One can obtain matrices of this form as products $\kappa(0,u)\kappa(0,-v)$
       which means that these generators are in fact elements of $\Spin(\V,N')$. The formula for the restriction of $\xi_V$ to the subgroup $\Spin(\octo,N)$ 
      \begin{equation}\label{eq:chi8}
         \xi_V\left(\begin{pmatrix}
                            L_uL_{\bar{v}} & 0 \\
                            0 & L_{\bar{u}}L_v
                    \end{pmatrix}\right)
                    \begin{pmatrix}
                        s& x_3 \\
                        \bar{x}_3 & -s
                    \end{pmatrix}
        = \begin{pmatrix}
                s & -u(\bar{v}x_3\bar{v})u \\
                -\bar{u}(v\bar{x}_3v)\bar{u} & -s
         \end{pmatrix}
        \end{equation}
	is easily proved using \eqref{eq:spin9vect}.

        Analogously, the action of $\Spin(\octo,N)$ on $\octo^2$ is given by
        \begin{equation}
	\label{eq:chi9}
         \xi_S\left(\begin{pmatrix}
                            L_uL_{\bar{v}} & 0 \\
                            0 & L_{\bar{u}}L_v
                    \end{pmatrix}\right)\begin{pmatrix}x_1\\x_2\end{pmatrix} =
                    \begin{pmatrix}
                     u(\bar{v}x_1)\\
                     \bar{u}(vx_2)
                    \end{pmatrix},
        \end{equation}
         which is the direct sum of  two inequivalent spinor representations of $\Spin(\octo,N)$. Please note that the quadratic form $N$ is invariant with respect to all the three inequivalent actions of $\Spin(\octo,N)$ on the vector space $\octo$.
%    \end{remark}

   \subsection[Transitivity of the action]{Transitivity of the $\F$ action on $\affOP$}
   \begin{lemma}
\label{fundlema}
%       Any element of $\affOP$ has isotropic vector part if and only if it has __isotropic the spinor part__. In such a case it has the form
    Let \[
        A = \begin{pmatrix}
            -2t & \bar{x}_1 & \bar{x}_2 \\
            x_1 & t + s & \bar{x}_3 \\
            x_2 & x_3 & t-s
            \end{pmatrix}
    \]  be an element of $\affOP$. Then the vector part of $A$ is isotropic (i.e. $s^2 + N(x_3) = 0$) if and only if $N(x_1)=N(x_2)=0$ and if and only if $t=0$.
% In such case $A$ has the form
% %     $\begin{smatrix} 0 & \bar{x}_1 & \bar{x}_2\\ x_1 & 0& 0\\ x_2&0&0 \end{smatrix}$ where $N(x_i)=0$ and $x_1\bar{x}_2=0$.
%       \begin{equation}\label{eq:isotropic}
%        \begin{pmatrix}
%         0 & \bar{x}_1 & \bar{x}_2\\
%   x_1 & 0& 0\\ x_2&0&0
%        \end{pmatrix}
%       \end{equation}
%       where $N(x_i)=0$ and $x_1\bar{x}_2=0$.
   \end{lemma}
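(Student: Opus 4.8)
The plan is to observe that the three scalar equations coming from the \emph{diagonal} of the matrix equation $A^2 = 0$ already force all three stated conditions; the off-diagonal part of $A^2 = 0$ will not be needed at all.

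First I would compute the diagonal of $A^2$. Writing $a=-2t$, $b=t+s$, $c=t-s$ for the diagonal entries of $A$, the $(k,k)$ entry of $A^2$ is the sum $\sum_j A_{kj}A_{jk}$, in which every summand is a \emph{single} product of two octonionic entries, so no parenthesization ambiguity arises and one may freely use $\bar{x}x = x\bar{x} = N(x)$. Setting each diagonal entry of $A^2$ to zero then gives
\begin{align}
4t^2 + N(x_1) + N(x_2) &= 0, \nonumber\\
(t+s)^2 + N(x_1) + N(x_3) &= 0, \nonumber\\
(t-s)^2 + N(x_2) + N(x_3) &= 0. \nonumber
\end{align}

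The key algebraic step is to add the last two equations, obtaining $2t^2 + 2s^2 + N(x_1) + N(x_2) + 2N(x_3) = 0$, and then to substitute $N(x_1)+N(x_2) = -4t^2$ from the first equation. This collapses everything to the single identity
\[
s^2 + N(x_3) = t^2 .
\]
Since we work over $\C$, where $t^2 = 0$ if and only if $t=0$, this identity establishes at once that the vector part of $A$ is isotropic (i.e.\ $s^2+N(x_3)=0$) if and only if $t=0$.

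To close the chain of equivalences with the spinor norms, I would combine the first diagonal equation, $4t^2 + N(x_1) + N(x_2) = 0$, with the difference of the second and third, which reads $4ts + N(x_1) - N(x_2) = 0$. If $t=0$ these reduce to $N(x_1)+N(x_2)=0$ and $N(x_1)-N(x_2)=0$, forcing $N(x_1)=N(x_2)=0$; conversely, if $N(x_1)=N(x_2)=0$, the first equation gives $t^2=0$, hence $t=0$. The computation is otherwise routine, and I expect no serious obstacle: the only point demanding a moment's care is verifying that nonassociativity causes no ambiguity in evaluating the diagonal of $A^2$, which is automatic since each contributing term is a product of exactly two octonionic entries.
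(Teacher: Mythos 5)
Your proof is correct and takes essentially the same approach as the paper: the paper's own proof consists of the single remark that the lemma is ``a straightforward consequence of the fact that diagonal elements of $A^2$ must equal zero,'' and your three diagonal equations, together with the sum/difference manipulations yielding $s^2+N(x_3)=t^2$ and $4ts+N(x_1)-N(x_2)=0$, are exactly the details behind that remark. Your observation that no parenthesization issues arise on the diagonal (each term being a product of only two octonions) is the same reason the paper can treat this as routine.
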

   \begin{proof}
    The statement is a straightforward consequence of the fact that diagonal elements 
%    \begin{align*}
%      4t^2 + N(x_1) + N(x_2) &= 0 \\
%      t^2 + 2s + s^2+ N(x_1) + N(x_3) &= 0 \\
%      t^2 - 2s + s^2+ N(x_2) + N(x_3) &= 0
%    \end{align*}
of $A^2$ must equal zero.
%     The diagonal elements of $A^2=0$ are
%   \begin{subequations}
%     \begin{align}
%       4t^2 + N(x_1) + N(x_2) &= 0 \label{a}\\
%       t^2 + 2s + s^2+ N(x_1) + N(x_3) &= 0 \label{b}\\
%       t^2 - 2s + s^2+ N(x_2) + N(x_3) &= 0\label{c}
%     \end{align}
%   \end{subequations}
% 
%   If the vector part is isotropic then the sum of \eqref{b} and \eqref{c} reduces to $2t^2 + N(x_1) + N(x_2) =0$. Substracting with \eqref{a} yields $2t^2=0$.
% 
%   If $t=0$ then the sum of \eqref{b} and \eqref{c} yields to $2(s^2+N(x_3))=0$.
% 
%   We have established that $t=0$ is equivalent with $s^2+N(x_3)=0$. In such a case the equations \eqref{b}, \eqref{c} reduce to $N(x_1)=0$, $N(x_2)=0$.
   \end{proof}

   \begin{theorem}
      The group $\F$ acts transitively on $\affOP$. For every $A\in \affOP$ there exists  $g\in\F$ such that 
%$g\cdot A$ is the following matrix
      \begin{equation}\label{eq:canform}
	g\cdot A=
         \begin{pmatrix}
                     \imath & 1 & 0 \\
                     1 &-\imath & 0 \\
                     0 & 0 & 0
                  \end{pmatrix}.
      \end{equation}
   \end{theorem}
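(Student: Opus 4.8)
The plan is to take an arbitrary $A\in\affOP$, written in the block form of Lemma~\ref{fundlema} with parameters $t,s,x_1,x_2,x_3$, and to carry it to the canonical form \eqref{eq:canform} by three successive moves: the vector representation $\xi_V$ of $\Spin(9,\C)=\Spin(\V,N')$, its subgroup $\Spin(8,\C)$ of Section~\ref{ss:spin8} acting on the spinor part by $\xi_S$, and finally a complex rotation $\mathrm{SO}(2,\C)\subset\mathrm{O}(3,\C)\hookrightarrow\F$. The guiding observation is that $\affOP$ is $\F$-invariant, so after each move the entries still satisfy all the scalar and octonionic equations packaged in $A^2=0$ and $\Tr A=0$. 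Expanding the three diagonal equations gives $s^2+N(x_3)=t^2$, i.e. the vector part $(s,x_3)\in\V$ has norm $t^2$; this is exactly the dichotomy of Lemma~\ref{fundlema}, and it splits the proof into the cases $t\neq0$ and $t=0$.

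\emph{Generic case $t\neq0$.} Now $(s,x_3)$ is non-isotropic with $N'(s,x_3)=t^2$, so by Lemma~\ref{lem:vect_trans} there is $g\in\Spin(9,\C)$ with $\xi_V(g)$ sending the vector part to $(t,0)$, i.e. normalising $s=t$, $x_3=0$, while $\xi_S(g)$ transforms $(x_1,x_2)$ and the $(1,1)$-entry $-2t$ is untouched. For the transformed matrix the off-diagonal equation $(A^2)_{13}=0$ reads $(t+s)\bar x_2=\bar x_1\bar x_3=0$; since $t+s=2t\neq0$ this forces $x_2=0$, and the diagonal equations give $N(x_1)=-4t^2$. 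The stabiliser of $(t,0)$ is the subgroup $\Spin(8,\C)$, which fixes the vector part and acts on $x_1$ by the spinor representation \eqref{eq:chi9}; this action preserves $N$ and, by triality and Witt's theorem, is transitive on $\{x\in\octo\mid N(x)=-4t^2\}$. Hence I may rotate $x_1$ to the complex scalar $2\imath t\in\C\cdot1\subset\octo$, after which $A$ is supported on the $(1,2)$-block with complex entries only, and that block is a rank-one symmetric matrix $ww^{T}$ with $w$ null in $\C^2$. Applying $\mathrm{SO}(2,\C)$ in the $(1,2)$-plane multiplies $w$ by an arbitrary $e^{\imath\theta}\in\C^{\times}$, hence rescales the block by $e^{2\imath\theta}$ and keeps the third row and column zero; choosing $\theta$ (and, if the null lines do not match, a reflection in $\mathrm{O}(2,\C)$) produces $\begin{smatrix}\imath&1\\1&-\imath\end{smatrix}$, which is \eqref{eq:canform}. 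This rescaling step is also what yields transitivity on the \emph{cone} $\affOP$ rather than merely on its projectivisation.

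\emph{Degenerate case $t=0$.} By Lemma~\ref{fundlema} this is precisely the case of isotropic vector part with $N(x_1)=N(x_2)=0$, and it cannot be left by $\Spin(9,\C)$, which fixes $t$. The plan is to re-enter the generic case by an element of $\mathrm{O}(3,\C)$: a complex rotation in the $(1,2)$- or $(1,3)$-plane changes the $(1,1)$-entry by a non-constant trigonometric function of the angle as soon as one of $s,\Re x_1,\Re x_2$ is non-zero, and any such choice yields $t\neq0$. In the residual situation $s=\Re x_1=\Re x_2=0$ I first use $\xi_S$ of a generator $g_{0,u}$ with $N(u)=1$ to create a non-zero real part: since $\Re(\xi_S(g_{0,u})(x_1,x_2))$ equals, up to a factor, $\langle x_2,\bar u\rangle$ and $\langle x_1,u\rangle$, and $\langle\cdot,\cdot\rangle$ is non-degenerate, a suitable $u$ makes $\Re x_1$ or $\Re x_2$ non-zero; if the entire spinor part vanishes, I instead use $\xi_V$ to rotate the non-zero null vector part to one with $s\neq0$. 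Each sub-case reaches $t\neq0$, after which the generic reduction applies.

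The main obstacle is exactly this degenerate case: there the invariant $t^2$ vanishes and gives no leverage, Lemma~\ref{lem:vect_trans} only moves among isotropic vectors, so one must explicitly exhibit an element of $\F$ preserving $\affOP$ that makes the $(1,1)$-entry non-zero, and guaranteeing its existence is what forces the small sub-case analysis above. Once $t\neq0$ is secured, the remaining normalisation is the clean three-step argument, and the only other point needing care is checking that the final $\mathrm{SO}(2,\C)$ congruence genuinely scales a null rank-one block, which is a short computation on the two null lines of $\C^2$.
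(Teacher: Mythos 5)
Your proof is correct, and it rests on the same machinery as the paper's proof --- the dichotomy of Lemma~\ref{fundlema}, a first normalisation by $\Spin(9,\C)$ via Lemma~\ref{lem:vect_trans}, a second one inside $\Spin(8,\C)$, and a final move in $\mathrm{O}(3,\C)$ --- but both halves are executed genuinely differently. In the generic case $t\neq 0$ the paper only arranges $x_3=0$; it then reads off from the squared matrix \eqref{matrixsquare} that exactly one of $x_1,x_2$ is non-isotropic, treats the two resulting sub-cases with the explicit $\Spin(8,\C)$ element $\kappa(0,-1)\kappa(0,x_1/\sqrt{N(x_1)})$, removes a sign ambiguity by permutations, and finishes by conjugating with an explicit orthogonal matrix depending on $w$. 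Your sharper choice of target $(t,0)$ on the generalized sphere makes the $(1,3)$-entry of $A^2$ read $(t+s)\bar x_2=0$ with $t+s=2t\neq 0$, so $x_2=0$ immediately: the sub-case split disappears, and your endgame --- the surviving block is $ww^T$ with $w$ null, and $\mathrm{SO}(2,\C)$ scales each null line of $\C^2$ by all of $\C^{\times}$ --- replaces the paper's explicit final matrix by a transparent conceptual argument. The price is that you must invoke triality and Witt's theorem to know that the half-spin action \eqref{eq:chi9} of $\Spin(8,\C)$ is transitive on generalized spheres, where the paper simply exhibits a group element doing the job. In the degenerate case $t=0$ the comparison reverses: the paper normalises the non-zero null vector part to $\begin{smatrix}\imath w & w\\ w & -\imath w\end{smatrix}$ and conjugates by the permutation $\begin{smatrix}0&1&0\\1&0&0\\0&0&1\end{smatrix}$, which plants the entry $\imath w\neq 0$ in the $(1,1)$-corner (and the same permutation disposes of the zero-vector-part case); you instead use continuous rotations, which forces the extra residual analysis $s=\Re x_1=\Re x_2=0$ cured by the spinor perturbation $g_{0,u}$ or a vector rotation. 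That analysis is valid (the required $u$ with $N(u)=1$ and $\langle x_1,u\rangle\neq 0$ exists because the quadric $N=1$ spans $\octo$ and $\langle\cdot,\cdot\rangle$ is non-degenerate), but the paper's permutation trick would shorten your second half considerably; conversely, your $(t,0)$-normalisation and null-line rescaling would streamline the paper's first half.
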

   \begin{proof}
  First we suppose that $A\in\affOP$ has nonisotropic vector part. In such case we can use the lemma~\ref{lem:vect_trans} to prove that there exists an element $h_1\in \Spin(\V, N')$ such that
        \[
        \xi(h_1)A=\begin{pmatrix}
                           r_1 & \bar{x}_1 & \bar{x}_2 \\
                           x_1 & r_2 & 0 \\
                           x_2 & 0 & r_3
                  \end{pmatrix} \text{, with } r_1, r_2, r_3\in\C,\; x_1,x_2\in\octo.
        \]
Let as denote $\xi(h_1)=:g_1\in\F$.
The matrix $(g_1\cdot A)^2$ has the form
\begin{equation}
\begin{pmatrix}
\label{matrixsquare}
r_1^2+N(x_1)+N(x_2) & \bar{x}_1(r_1+r_2) & \bar{x}_2 (r_1+r_3)\\
x_1(r_1+r_2) & r_2^2 + N(x_2) & x_1 \bar{x}_2 \\
x_2(r_1+r_3) & x_2\bar{x}_1 & r_3^2+N(x_2)
\end{pmatrix}.
\end{equation}
This is a zero matrix, in particular $N(x_1)N(x_2)=N(x_1 \bar{x}_2)=0$, so $x_1$ and $x_2$ can not be both non-isotropic. On the other hand, they can not be both isotropic because of the lemma \ref{fundlema}.

Assume first that $N(x_1)\neq 0$ and $N(x_2)=0$.
The action of $\Spin(\octo, N)$ preserves the vector part $\begin{smatrix}r_2 & 0\\ 0 & r_3\end{smatrix}$
of $g_1\cdot A$ because of $(\ref{eq:chi8})$. 
Let $h_2:=\kappa(0,-1)\kappa(0,\frac{x_1}{\sqrt{N(x_1)}}) \in \Spin(\octo,N)$ and $\xi(h_2)=:g_2\in\F$. 
By $(\ref{eq:chi9})$, $g_2$ sends the spinor part
$x_1 \oplus x_2$ of $g_1\cdot A$ to $x_1'\oplus x_2'$ where $x_1'=\sqrt{N(x_1)} \in \C$ and 
$x_2'=\frac{1}{\sqrt{N(x_1)}}x_1 x_2$. The matrix $(g_2g_1\cdot A)^2$ has the same form as
$(\ref{matrixsquare})$ with $x_1$ and $x_2$ substituted by $x_1'$ and $x_2'$. It is still a zero matrix and its
$(2,3)$-position $0=x_1'\bar{x}_2'$ implies $x_2'=0$ ($x_1'$ is a nonzero complex number).
The other positions of this matrix imply $0=r_3^2+N(x'_2)$, so $r_3=0$, and $r_1^2+N(x_1')=r_1^2+(x_1')^2=0$,
so 

$$ 
g_2 g_1\cdot A=
         \begin{pmatrix}
               \pm \imath w &  w & 0 \\
                  w & \mp \imath w & 0 \\
               0 & 0& 0
         \end{pmatrix}
$$
for some  $0\neq w\in \C$.

      The case $N(x_1)=0$, $N(x_2)\neq 0$ leads in a similar way to a matrix of the form
      \(
         \begin{smatrix}
               \pm \imath w & 0 & w  \\
               0 & 0& 0\\
               w & 0 &  \mp \imath w \\
         \end{smatrix}
      \), $0\neq w\in \C$,
      which can be transformed by the orthogonal matrix
      \(
      \begin{smatrix}
         1 & 0 & 0 \\
         0 & 0 & 1 \\
         0 & 1 & 0
      \end{smatrix}
      \)
      to the previous one. 
One can get rid of the sign ambiguity with
      \(\begin{smatrix} 0&1&0\\1&0&0\\0&0&1\end{smatrix}\)
and the matrix
      \(
       \begin{smatrix}
               \imath w &  w & 0 \\
                  w & - \imath w & 0 \\
               0 & 0& 0
         \end{smatrix}
      \)
can be transformed to the canonical form \eqref{eq:canform} by conjugating by the orthogonal matrix
      \[
        \begin{pmatrix}
            \frac {1}{\sqrt{w}}&0&{\frac {-\imath\sqrt{
            1-w}}{\sqrt{w}}}\\{\frac {-\imath \left( 1-w \right) }{
            \sqrt{w}}}&\sqrt{w}&-{\frac {\sqrt{1-w}}{\sqrt{w}}}
            \\\imath\sqrt{1-w}&\sqrt{1-w}&1
         \end{pmatrix}.
      \]

So, $g_3 g_2 g_1\cdot A$ has the canonical form $(\ref{eq:canform})$, 
where $g_3$ is some element in the image of the embedding 
$O(3,\C)\hookrightarrow \F$ defined in the section \ref{liealgebra}.

If $A$ has isotropic but nonzero vector part, then the preceding lemma implies that the 
topleft element of $A$ is $0$. Using the lemma  \ref{lem:vect_trans} we can find an 
element $g'\in\xi(\Spin(\V, N'))\leq \F$ such that 
\( g'\cdot A=
        \begin{smatrix}
            0 &\bar{x}_1 & \bar{x}_2 \\
            x_1 & \imath w & w\\
            x_2 & w & -\imath w
        \end{smatrix}
\)
where $w\neq0$. 
%
%
%Since $A$ is an element of $\affOP$ we have
%      \[
%            A^2 = \begin{pmatrix}
%                0  &  w(\imath \bar{x}_1 + \bar{x}_2)  &  w(\bar{x}_1 -\imath \bar{x}_2) \\
%                 w(\imath x_1 + x_2)  & 0   &  x_1\bar{x}_2 \\
%                w(x_1 -\imath x_2) &  x_2\bar{x}_1            & 0
%            \end{pmatrix} =0.
%      \]
%      Now it follows that $x_1 = \imath x_2$ which means that 
%      \(A=
%       \begin{smatrix}
%            0 &\bar{x} & -\imath\bar{x} \\
%            x & \imath w & w\\
%            -\imath x & w & -\imath w
%        \end{smatrix}
%      \) with $N(x)=0$.
Conjugation by $\begin{smatrix}
                                          0 & 1 &0 \\
                                          1 & 0 & 0\\
                                          0 & 0 & 1
                                          \end{smatrix}$
leads to a matrix whose top left element is $\imath w\neq 0$. 
By the previous lemma, such a matrix has nonisotropic vector part
and we have reduced this case to the already solved one.

Finally, suppose that $A$ has zero vector part,
      \(
       A=\begin{smatrix}
        0  & \bar{x}_1&\bar{x}_2\\
        x_1 & 0 &0\\
        x_2 &0&0       
       \end{smatrix}.
      \) 
%As $A\neq0$ by definition, we can suppose that at least one of the entries is nonzero. 
This matrix is nonzero by definition.
%the action of $\kappa(0,-1)\kappa(0,\frac{x_1}{\sqrt{N(x_1)}})\in \Spin(\octo, N)$
%transforms $A$ into 
%$\begin{smatrix}
%0  & x_1'&\bar{x}_2'\\
%x_1' & 0 &0\\
%x_2' &0&0       
%\end{smatrix}
%$
%where $x_1'$ is a nonzero complex number and conjugation with
%%
If $x_2\neq 0$, then the action of 
$\begin{smatrix}
  0 & 1 &0 \\
  1 & 0 & 0\\
  0 & 0 & 1
\end{smatrix}$
transforms it to a matrix with nonzero vector part. The case $x_1\neq0$ is treated similarly.
\end{proof}

\begin{remark}
   We see from the proof that in order to prove transitivity of $\F$ on $\plane$, it is sufficient to consider only discrete subgroup of $\mathrm{O}(3,\C)$ isomorphic to $\mathrm{S}_3$ -- a permutation group on three letters. This is a manifestation of the triality principle.
\end{remark}

Now we prove that the cone $\affOP$ over $\plane$ is a smooth manifold.
\begin{proposition}
   The space $\affOP$ is a smooth manifold of dimension $32$.
    %The complex dimension of $\plane$ is $15$. 
\end{proposition}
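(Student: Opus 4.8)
The plan is to use the transitivity theorem to reduce everything to a single tangent-space computation. First I would observe that, by the Theorem, $\affOP$ is exactly one orbit of $\F$, namely $\affOP=\F\cdot A_0$ with $A_0=\begin{smatrix}\imath&1&0\\1&-\imath&0\\0&0&0\end{smatrix}$ the canonical element of \eqref{eq:canform}. Since $\F$ is a connected complex Lie group acting $\C$-linearly (hence holomorphically and algebraically) on the vector space $\Herm_0(3,\octo)$, this orbit is a locally closed, smooth, connected complex submanifold; in particular $\affOP$ is a smooth manifold. It therefore only remains to compute its dimension, and by homogeneity it suffices to do so at the single point $A_0$.

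For the dimension I would compute the tangent space $T_{A_0}\affOP$. The map $q\colon \Herm_0(3,\octo)\to\Herm(3,\octo)$, $q(A)=A^2=A\circ A$, is $\F$-equivariant because every $g\in\F$ preserves the Jordan product, so $q(gA)=g\cdot q(A)$; consequently the orbit lies in the fibre $q^{-1}(0)$ and $T_{A_0}\affOP\subseteq\ker Dq_{A_0}$. Polarising the quadratic map $q$ gives $Dq_{A_0}(B)=A_0B+BA_0$, an expression that is unambiguous in spite of the nonassociativity of $\octo$ because matrix multiplication is bilinear in its two arguments. Thus I am reduced to solving the linear system $A_0B+BA_0=0$ for $B\in\Herm_0(3,\octo)$.

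Writing $B=\begin{smatrix}b_1&\bar y_1&\bar y_2\\ y_1&b_2&\bar y_3\\ y_2&y_3&b_3\end{smatrix}$ with $b_i\in\C$, $\sum b_i=0$, $y_i\in\octo$, I would evaluate the six independent hermitian entries of $A_0B+BA_0$. Since the nonzero entries of $A_0$ are complex scalars, every product is a scalar multiple of an octonion and the nonassociativity of $\octo$ plays no role, so the entries are elementary. The diagonal together with the $(1,2)$ entry force $b_2=-b_1$ and $b_3=0$ and fix the scalar part of $y_1$ to be $-\imath b_1$, while the $(1,3)$ and $(2,3)$ entries both collapse to the single octonionic relation $y_3=-\imath y_2$. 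Hence the free data are $b_1\in\C$, the imaginary part of $y_1$ ($7$ complex parameters) and $y_2\in\octo$ ($8$ complex parameters), giving $\dim_\C\ker Dq_{A_0}=1+7+8=16$, i.e. a real dimension $32$.

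Finally I must check that the orbit actually fills this kernel, so that the dimension is exactly $16$ rather than merely at most $16$; this is the one genuinely delicate step, since a priori $\ker Dq_{A_0}$ is only the Zariski tangent space of the scheme $\{A^2=0\}$ and could strictly exceed $\dim T_{A_0}\affOP=\dim(\mathfrak{f}_4\cdot A_0)$ if that scheme were non-reduced at $A_0$. I would close this gap by exhibiting $16$ linearly independent tangent vectors coming from the infinitesimal action of the explicitly realised subgroups — the one-parameter families $g_{r,u}$ generating $\Spin(\V,N')$ together with the embedded $\mathrm{O}(3,\C)$ — and verifying that the images $\tfrac{d}{dt}\big|_0\,\xi(\cdot)A_0$ already span $\ker Dq_{A_0}$. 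Equivalently, one may read the lower bound off the stabiliser computation of the final section, where the isotropy group is identified with the parabolic $\P$ of complex dimension $37$, so that the point stabiliser has dimension $36$ and $\dim_\C\affOP=52-36=16$. Either way the conclusion is $\dim_\R\affOP=32$. The principal obstacle is thus neither the smoothness (which is free from transitivity) nor the upper bound (a short linear computation), but establishing that the homogeneous tangent space exhausts $\ker Dq_{A_0}$.
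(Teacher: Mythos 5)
Your proposal is correct, and it reaches the result by a genuinely different route from the paper, even though the computational core is identical. The paper also works at the canonical point and solves the same linear system: its differential $B \mapsto 2A_0\circ B$ is exactly your $B\mapsto A_0B+BA_0$, and both computations give a kernel of complex dimension $16$ (your explicit relations are right; they in fact expose a typo in the paper's displayed kernel, whose $(3,3)$ entry must be $0$ rather than $2\Re(x_1)$, since tangent vectors are trace-free). The difference lies in how the kernel is tied to the manifold structure. The paper argues: by equivariance the rank of $df$, $f(A)=A^2$, is constant along the fibre $f^{-1}(0)\setminus\{0\}$, hence by the implicit function theorem the fibre is a smooth manifold whose tangent space equals the kernel --- smoothness and dimension in one stroke. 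You instead take smoothness from the fact that an orbit of an algebraic (holomorphic) action is a smooth locally closed subvariety, use the kernel only as an upper bound, and then must supply a matching lower bound. This costs you the extra step you correctly isolate as delicate, but it buys rigour: a constant-rank argument needs the rank to be constant on a \emph{neighbourhood} of the fibre, and the paper checks it only on the fibre itself, which in general does not suffice (for $f(x,y)=x^2y^2$ the differential vanishes identically on the fibre $\{xy=0\}$, which is not a manifold at the origin); your orbit argument is the standard repair of exactly this point.

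One caveat: of your two ways of obtaining the lower bound, only the second is a proof as written. It is legitimate, not circular, since the paper's identification of the stabilizer of the line $\C A_0$ with $\P$ (of complex dimension $37$) uses only transitivity, projectivity of $\plane$, and the identification of $\mathcal{J}_0$ with $\varrho_{\epsilon_1}$ --- not the present proposition. You should, however, justify the drop from $37$ to $36$: the stabilizer of the vector $A_0$ is the kernel of the character by which $\P$ acts on the line $\C A_0$, and this character is nontrivial (for instance because $\affOP$ is invariant under $\C^*$-scaling and is a single $\F$-orbit, so some element of $\P$ rescales $A_0$ nontrivially), whence the vector stabilizer has codimension $1$ in $\P$ and $\dim_\C\affOP=52-36=16$, i.e.\ real dimension $32$. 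Your first route, spanning the kernel by derivatives of the explicit one-parameter subgroups, is only a plan; carrying it out would be a computation of roughly the same size as the kernel computation itself.
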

\begin{proof}
Let as define the smooth map $f:\Herm(3,\octo)_0 \to \Herm(3,\octo)_0$ by  $f(A):= A^2$.
   We use the implicit function theorem to show that $\affOP=f^{-1}(0)\setminus\{0\}$ is a smooth manifold.
The differential of $f$ at $A$ is easily proved to be $B \mapsto 2 A\circ B$. 
%It is sufficient to compute its rank only at one point because 
We already know that $\F$ acts transitively on $f^{-1}(0)\setminus\{0\}=\affOP$ and so we have
   \[\dim \ker (B \mapsto A\circ B) = \dim \ker \big(B \mapsto g\cdot (A\circ (g^{-1}\cdot B))\big) = 
\dim \ker \big(B \mapsto (g\cdot A) \circ B\big) \]
   for any $g\in \F$. So, the differential $df$ of $f$ has constant rank on the set 
$f^{-1}(0)\setminus \{0\}$ and $\affOP$ is a smooth manifold.

   The kernel of the differential of $f$ at the canonical point \eqref{eq:canform} equals
   \[\left\{ \begin{pmatrix}
              \imath \Re (x_1) &           x_1          &     x_2 \\

                 \bar{x}_1     &     -\imath\Re (x_1)   & -\imath x_2 \\
                 \bar{x}_2     &     -\imath \bar{x}_2  & 2\Re(x_1)
             \end{pmatrix} \middle |\, x_1,x_2 \in \octo\right\}
   \]
   and is isomorphic to the tangent space of $\affOP$ at that point.
\end{proof}

\subsection{The real case}

By choosing an appropriate involution on $\mathcal{J}_3(\octo_\C)$ we get a model for $\F^{(-20)}/\P$ -- i.e. the conformal ifinity of the Einstein space $\mathbb{O}H^2$. According to Yokota \cite{Yokota} the following real subalgebra of $\mathcal{J}_3(\octo_\C)$  
\[\left\{ A \in \mathcal{J}_3(\octo_\C) : I_1\overline{A}^T I_1 = A, I_1 = \begin{smatrix}
                                   -1 & 0 & 0\\
				    0 & 1 & 0\\
				    0 & 0 & 1
                                  \end{smatrix}
 \right\} = 
  \left\{
\begin{pmatrix}
 r_1 & x_1 & x_2 \\
 - \overline{x}_1 & r_2 & x_3\\
 - \overline{x}_2 & \overline{x}_3 & r_3
\end{pmatrix} : x_i \in \octo_\R, r_i\in \R
\right\}
\]
has $\F^{(-20)}$ as its automorphism group. By restricting the map $\kappa$ to $\R\oplus \octo_\R$ we get presentation of $\Spin(9,\R)$ and the restriction of our representation $\xi$ maps $\Spin(9,\R)$ into $\F^{(-20)}$. Instead of $\mathrm{O}(3,\C)$ we have an orthogonal group of indefinite signature $\mathrm{O}(1,2,\R)$.

The model of $\F^{(-20)}/\P$ is given by the same equations as in the complex case. Since there are no isotropic vectors, the proof of transitivity is now much simpler. By transitivity of $\mathrm{SO}(9,\R)$ on spheres we can map any element of our model to a matrix of the form 
$\begin{smatrix}
 -2t & x_1 & x_2\\
  -\overline{x}_1 & t+s & 0 \\
 -\overline{x}_2 & 0 & t-s                                                                                                                                                                                                                                                                                                                                                                                                                                                                                                          
\end{smatrix}$. The square of this matrix has to be zero by definition which for diagonal elements gives three equations that yield  easily $t^2-s^2 = 0$. The case $t=-s$ leads to $x_1=0$ and can be reduced to the case of $t=s$ by conjugation with $\begin{smatrix} 1 & 0 &0\\0&0&1\\0&1&0\end{smatrix}$. 

 The case $t=s$ gives $x_2=0$ and we can easily find an action of $\Spin(8,\R)$ to map $x_1$ to a real number which gives us a matrix in the final form 
$\begin{smatrix}
  -r & x & 0\\
   -x & r&0\\
   0 & 0 & 0
 \end{smatrix}$, where all the entries are real and $r^2 = x^2$. If $r=x$, we can assume that $x$ is positive and then conjugation by the matrix $\frac{1}{2}\sqrt{\frac{1}{x}} \begin{smatrix} x+1 & 1-x & 0 \\ 1-x & x+1 &0 \\ 0&0&2\sqrt{x}\end{smatrix}$ gives the canonical form $\begin{smatrix} -1 & 1 &0 \\-1&1&0\\0&0&0\end{smatrix}$. If $r=-x$, we assume $x$ to be negative and use conjugation by $\frac{1}{2}\sqrt{\frac{-1}{x}} \begin{smatrix} x-1 & x+1 & 0 \\ -(x+1) & 1-x &0 \\ 0&0&2\sqrt{-x}\end{smatrix}$.

We see that the automorphism groups again acts transitively even on the cone over the projectivization. In contrast to the complex case however we need only subgroup of $\mathrm{O}(1,2,\R)$ isomorphic to $\mathbb{Z}_2$.

\section[Parabolic stabilizer]{Description of the stabilizer of the $\F$ action}

In this section we will identify the stabilizer of $\plane$ as a concrete parabolic subgroup of $\F$. 
\begin{lemma}
There exists up to isomorphism only one irreducible representation $\varrho$ of the group $\F$ such that
\[
1 < \dim_\C\,\varrho\leq 26.
\]
The highest weight of this representation is $\varpi_4=\epsilon_1$.
\end{lemma}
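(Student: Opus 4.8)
The plan is to reduce the statement to a short computation with Weyl's dimension formula. Because the complex group $\F$ is connected and simply connected, its finite-dimensional holomorphic irreducible representations are exactly the $\varrho_\lambda$ attached to dominant integral weights $\lambda=\sum_{i=1}^4 a_i\varpi_i$ with $a_i\in\mathbb{Z}_{\ge 0}$, and $\varrho_\lambda$ is trivial precisely when $\lambda=0$. So it suffices to prove that $\dim\varrho_\lambda\ge 26$ for every nonzero dominant $\lambda$, with equality if and only if $\lambda=\varpi_4$.

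First I would establish monotonicity of $\lambda\mapsto\dim\varrho_\lambda$. For each positive root $\alpha\in\Phi^+$ the coroot is a nonnegative integer combination of the simple coroots, $H_\alpha=\sum_i c^\alpha_i H_{\alpha_i}$, and since the $\varpi_i$ form the dual basis we have $\varpi_i(H_\alpha)=c^\alpha_i\ge 0$. With $\rho=\sum_i\varpi_i$, Weyl's formula therefore reads
\[
\dim\varrho_\lambda=\prod_{\alpha\in\Phi^+}\frac{(\lambda+\rho)(H_\alpha)}{\rho(H_\alpha)}=\prod_{\alpha\in\Phi^+}\frac{\sum_i(a_i+1)\,c^\alpha_i}{\sum_i c^\alpha_i}.
\]
Each factor is $\ge 1$ and nondecreasing in every $a_i$, and for a simple root $\alpha=\alpha_i$ the factor equals $a_i+1$; hence the product is strictly increasing in each coordinate $a_i$. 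In particular, for nonzero $\lambda$ any index $j$ with $a_j\ge 1$ gives $\dim\varrho_\lambda\ge\dim\varrho_{\varpi_j}\ge\min_k\dim\varrho_{\varpi_k}$.

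Next I would evaluate the formula on the four fundamental weights. Writing out the $24$ positive roots of $\mathrm{F}_4$ in the coordinates $\epsilon_i$ (the twelve long roots $\epsilon_i\pm\epsilon_j$ and the twelve short roots $\epsilon_i$ and $\tfrac12(\epsilon_1\pm\epsilon_2\pm\epsilon_3\pm\epsilon_4)$) and carrying out the products yields
\[
\dim\varrho_{\varpi_1}=52,\quad\dim\varrho_{\varpi_2}=1274,\quad\dim\varrho_{\varpi_3}=273,\quad\dim\varrho_{\varpi_4}=26,
\]
so the minimum $26$ is attained only at the index $4$. Combined with the previous paragraph this gives $\dim\varrho_\lambda\ge 26$ for all nonzero dominant $\lambda$; and if $\dim\varrho_\lambda=26$ then every index $j$ with $a_j\ge 1$ must satisfy $\dim\varrho_{\varpi_j}=26$, forcing $j=4$, while strict monotonicity excludes any $\lambda\neq\varpi_4$. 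This proves uniqueness, and the highest weight is $\varpi_4=\epsilon_1$ by the identification of fundamental weights recorded in Section~\ref{liealgebra}.

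The conceptual content is light; the only real labor is the evaluation of the four fundamental dimensions, and \textbf{the main obstacle} is the bookkeeping for $\varrho_{\varpi_2}$ and $\varrho_{\varpi_3}$, where the long/short-root distinction makes the coroot coefficients $c^\alpha_i$ vary across the $24$ factors. This can be lightened in two ways: the representation $\varrho_{\varpi_4}$ of dimension $26$ is already in hand as the action of $\F=\Aut(\mathcal{J}_3(\octo))$ on the trace-free part $\Herm_0(3,\octo)\cong\C^{26}$, so only the inequalities $\dim\varrho_{\varpi_1},\dim\varrho_{\varpi_2},\dim\varrho_{\varpi_3}\ge 27$ are actually needed, and these (or the exact values) may simply be quoted from the tables for $\mathrm{F}_4$ in \cite{Bourbaki}.
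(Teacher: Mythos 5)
This is correct and essentially the paper's own proof: strict monotonicity of the Weyl dimension formula in the coefficients $a_i$ combined with the four fundamental dimensions $\dim\varrho_{\varpi_1}=52$, $\dim\varrho_{\varpi_2}=1274$, $\dim\varrho_{\varpi_3}=273$, $\dim\varrho_{\varpi_4}=26$, the only difference being that the paper obtains these values from the computer algebra package LiE while you propose a hand computation or Bourbaki's tables, and you spell out the monotonicity argument (nonnegativity of the coroot coefficients $c^\alpha_i$, with the factor $a_i+1$ at each simple root) that the paper leaves implicit. One caution on your closing aside: taking $\dim\varrho_{\varpi_4}=26$ from the action of $\F$ on $\Herm_0(3,\octo)$ would be circular at this point, since the paper establishes that this $26$-dimensional action is irreducible with highest weight $\epsilon_1$ (Proposition \ref{defrep}) only as a consequence of the present lemma.
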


\begin{proof}
Let $\lambda, \mu \in \mathfrak{h}^*$  be two integral dominant weights, $\mu\neq 0$. By a direct application of the 
Weyl dimensional formula (see Goodman, Wallach~\cite{GoodmanWallach}), we obtain that
$\dim \varrho_{\lambda+\mu}>\dim \varrho_\lambda$.
Using the program LiE \cite{LiE}, we get $\dim\rho_{\varpi_1}=52$, $\dim\varrho_{\varpi_2}=1274$, $\dim\varrho_{\varpi_3}=273$ and $\dim\varrho_{\varpi_4}=26$. 
By the previous inequality, we see that there is only one irreducible $26$-dimensional representations of the Lie algebra $\mathfrak{f}_4$.
\end{proof}

Since $\dim\mathcal{J}_0=26$ and all finite dimensional representation of the simple Lie group $\F$ are completely reducible, we obtain immediately the following.
\begin{proposition}
\label{defrep}
The restriction to the defining representation of $\F$ on $\mathcal{J}_0=\Herm(3,\octo)_0$ 
is isomorphic to the $26$-dimensional irreducible representation $\varrho_{\epsilon_1}$.
\end{proposition}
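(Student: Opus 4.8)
The plan is to combine the defining action of $\F$ on $\Herm(3,\octo)$ with complete reducibility and the preceding lemma, which has already done the dimension-counting work. First I would observe that $\F$ genuinely acts on the $26$-dimensional space $\mathcal{J}_0 = \Herm(3,\octo)_0$. By definition $\F = \Aut(\mathcal{J})$ acts on all of $\Herm(3,\octo)$, and this action preserves the trace (as recorded in Section \ref{liealgebra}); hence the trace-free subspace $\mathcal{J}_0$ is invariant, and the restriction is a genuine $26$-dimensional complex representation of $\F$.

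Next I would invoke semisimplicity. Since $\F$ is a simple complex Lie group, every finite-dimensional representation is completely reducible, so $\mathcal{J}_0$ decomposes as a direct sum of irreducible representations, each of complex dimension at most $26$. By the previous lemma the only irreducible representation whose dimension is greater than $1$ and at most $26$ is $\varrho_{\epsilon_1}$, of dimension exactly $26$; the only other constituent that can occur is therefore the trivial one-dimensional representation. Consequently the decomposition of $\mathcal{J}_0$ is either a single copy of $\varrho_{\epsilon_1}$ or a sum consisting entirely of trivial summands.

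The one remaining point, and the only step carrying content beyond bookkeeping, is to exclude the purely-trivial alternative. I expect this to be immediate rather than a genuine obstacle: the transitivity theorem shows that $\F$ acts transitively on the nonempty cone $\affOP \subset \mathcal{J}_0$, so the action on $\mathcal{J}_0$ is visibly nontrivial (equivalently, the explicitly realized $\Spin(\V,N')$-action, or the $\mathrm{O}(3,\C)$-embedding, already moves points of $\mathcal{J}_0$). Hence not every irreducible constituent is trivial, which forces a copy of the $26$-dimensional $\varrho_{\epsilon_1}$ to appear; since $\dim_\C \mathcal{J}_0 = 26$, this summand exhausts the whole space. Therefore $\mathcal{J}_0 \cong \varrho_{\epsilon_1}$, as claimed.
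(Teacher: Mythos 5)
Your proof is correct and takes essentially the same route as the paper, which deduces the proposition from $\dim_\C\mathcal{J}_0=26$, complete reducibility for the simple group $\F$, and the preceding lemma's classification of irreducibles of dimension at most $26$. You are in fact slightly more careful than the paper (whose entire proof is the phrase ``we obtain immediately''): your explicit step ruling out a direct sum of trivial one-dimensional summands, using the nontriviality of the action on $\affOP$, fills in a point the paper leaves implicit.
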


It is clear from definition that $\plane$ is a projective variety. According to Humphreys \cite{Humphreys} this implies that the stabilizer group of any point is a parabolic subgroup of $\F$. Since any parabolic subgroup contains Borel subgroup, it follows that the points of the variety are lines spanned by highest weight vectors.

For a fixed choice of the Cartan subelgebra $\mathfrak{h}$ and simple roots $\Delta$ there is a $1-1$ correspondence between isomorphism classes of parabolic subalgebras $\mathfrak{p}\subseteq\mathfrak{g}$ 
and subsets $\Sigma \subseteq \Delta$ of the set $\Delta$ of simple roots described e.g. in \cite[chapter 3]{CapSlovak}.
We will denote the parabolic subalgebra corresponding to $\Sigma=\{\alpha_i\}$ by $\mathfrak{p}_i$.

Because the highest weight of $\mathcal{J}_0$ is $\epsilon_1$, the folloving theorem follows directly from \cite[Theorem 3.2.5]{CapSlovak}. Its proof is not difficult -- it is based on the fact that for each $X\in\mathfrak{g}_\alpha$ one can find $Y\in\mathfrak{g}_{-\alpha}$ such that $[Y,X] = H_\alpha$, where $H_\alpha(\lambda) = \langle \lambda,\alpha \rangle$ and the fact that the set of weights is invariant under the action of Weyl group.

\begin{theorem}
Let $P$ be the stabilizer of a point $p\in\plane$ with respect to the action of the group $\F$. 
Then the Lie algebra $\mathfrak{p}$ of the group $P$ is isomorphic to $\mathfrak{p}_4$.
\end{theorem}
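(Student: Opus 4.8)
The plan is to reduce the statement to the computation of the isotropy subalgebra of a highest weight line and then read off the corresponding subset of simple roots. Since $\plane$ is a projective variety on which $\F$ acts transitively, the stabiliser $P$ is parabolic and in particular contains a Borel subgroup; after conjugating (which changes $\mathfrak{p}$ only up to isomorphism, so is harmless for the statement) I may assume this is the standard Borel $B$ determined by $\Phi^+$. The point $p$ is then a $B$-stable line in $\mathbb{P}(\mathcal{J}_0)$. By Proposition~\ref{defrep} the $\F$-module $\mathcal{J}_0$ is the irreducible representation $\varrho_{\epsilon_1}=\varrho_{\varpi_4}$, and in an irreducible module the highest weight line is the \emph{only} $B$-stable line: a $B$-stable line is spanned by a weight vector annihilated by every positive root space, hence by the highest weight vector $v_{\varpi_4}$. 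Thus it suffices to determine
\[
\mathfrak{p}=\{X\in\mathfrak{f}_4 \mid X\cdot v_{\varpi_4}\in\C\, v_{\varpi_4}\}.
\]

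Next I would compute $\mathfrak{p}$ on the root space decomposition
\[
\mathfrak{f}_4=\mathfrak{h}\oplus\bigoplus_{\alpha\in\Phi^+}\bigl(\mathfrak{g}_\alpha\oplus\mathfrak{g}_{-\alpha}\bigr).
\]
Clearly $\mathfrak{h}\cdot v_{\varpi_4}\subseteq\C v_{\varpi_4}$ and $\mathfrak{g}_\alpha\cdot v_{\varpi_4}=0$ for $\alpha\in\Phi^+$, so the whole Borel subalgebra lies in $\mathfrak{p}$. For a positive root $\alpha$, any element of $\mathfrak{g}_{-\alpha}\cdot v_{\varpi_4}$ has weight $\varpi_4-\alpha\neq\varpi_4$, so $\mathfrak{g}_{-\alpha}\subseteq\mathfrak{p}$ if and only if $\mathfrak{g}_{-\alpha}\cdot v_{\varpi_4}=0$. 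Here I would use the two facts highlighted after the statement. Choosing an $\mathfrak{sl}_2$-triple $X\in\mathfrak{g}_\alpha$, $Y\in\mathfrak{g}_{-\alpha}$ with $[Y,X]=H_\alpha$, one gets $X\cdot(Y\cdot v_{\varpi_4})=[X,Y]\cdot v_{\varpi_4}=-\varpi_4(H_\alpha)\, v_{\varpi_4}$, which is a nonzero multiple of $v_{\varpi_4}$ whenever $\langle\varpi_4,\alpha\rangle\neq0$; hence in that case $Y\cdot v_{\varpi_4}\neq0$. Conversely, if $\langle\varpi_4,\alpha\rangle=0$, then the Weyl reflection $s_\alpha$ sends $\varpi_4-\alpha$ to $\varpi_4+\alpha$, so if $\varpi_4-\alpha$ were a weight then $W$-invariance of the weight set would force the strictly higher $\varpi_4+\alpha$ to be a weight as well, contradicting maximality of $\varpi_4$. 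Thus $\varpi_4-\alpha$ is not a weight, $\mathfrak{g}_{-\alpha}\cdot v_{\varpi_4}=0$, and $\mathfrak{g}_{-\alpha}\subseteq\mathfrak{p}$. In summary, for $\alpha\in\Phi^+$ one has $\mathfrak{g}_{-\alpha}\subseteq\mathfrak{p}$ exactly when $\langle\varpi_4,\alpha\rangle=0$.

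Finally I would identify the resulting subset of simple roots. Since $\varpi_4=\epsilon_1$ with $\langle\epsilon_1,\alpha_i\rangle=0$ for $i=1,2,3$ and $\langle\epsilon_1,\alpha_4\rangle>0$, a positive root $\alpha=\sum_i c_i\alpha_i$ satisfies $\langle\varpi_4,\alpha\rangle=c_4\langle\epsilon_1,\alpha_4\rangle$, which vanishes precisely when $c_4=0$, i.e. precisely for the roots of the $\mathrm{B}_3$ subsystem generated by $\{\alpha_1,\alpha_2,\alpha_3\}$. Therefore $\mathfrak{p}$ contains $\mathfrak{g}_{-\alpha}$ for exactly these roots and no others, which is by definition the standard parabolic subalgebra $\mathfrak{p}_4$ attached to $\Sigma=\{\alpha_4\}$; this is precisely the specialisation of \cite[Theorem 3.2.5]{CapSlovak}. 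The Levi factor then has semisimple part of type $\mathrm{B}_3$ together with the one-dimensional centre $\mathrm{T}_1$, as recorded in the abstract.

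The step that needs the most care is the vanishing direction $\langle\varpi_4,\alpha\rangle=0\Rightarrow\mathfrak{g}_{-\alpha}\cdot v_{\varpi_4}=0$ for the \emph{non-simple} roots: for simple roots it is immediate from highest weight theory, but in general it genuinely rests on the $W$-invariance of the weight set as above. One cannot shortcut this by claiming all weights are $W$-conjugate to $\varpi_4$, since $\varrho_{\varpi_4}$ is not minuscule (it has a zero weight). A secondary, purely bookkeeping point is to make sure that the subset $\Sigma=\{\alpha_4\}$ read off here is the one that the correspondence of \cite[chapter 3]{CapSlovak} attaches to $\mathfrak{p}_4$; the computed $\mathrm{B}_3$ Levi confirms this matching of conventions.
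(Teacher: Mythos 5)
Your proposal is correct and follows essentially the same route as the paper: the paper reduces the theorem to the observation that $p$ is the highest weight line of the irreducible module $\mathcal{J}_0\cong\varrho_{\varpi_4}$ and then cites \cite[Theorem 3.2.5]{CapSlovak}, explicitly noting that its proof rests on the $\mathfrak{sl}_2$-triple fact $[Y,X]=H_\alpha$ and the Weyl-invariance of the weight set --- precisely the two ingredients you use. Your write-up simply unfolds that cited argument in full detail, so it is a faithful (and correct) elaboration rather than a different method.
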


\begin{remark}
We see that $\affOP$ is the $\F$-orbit of the highest weight vector in $\mathcal{J}_0$. Points in $\affOP$ are exactly
all possible highest weight vectors for this representation, corresponding to different choices of $\mathfrak{h}$ and
$\Phi^+$. The real case can be treated in similar manner with analogous results. See \cite{CapSlovak} for details.
\end{remark}

\begin{remark}
    From the computation of the harmonic curvature (as done for example in \cite{Krysl}, also see \cite{CapSlovak}) one can prove that the homogeneous space doesn't admit curved deformations in the sense of regular normal Cartan geometries. However, if one relaxes the regularity condition there are some deformations of this structure \cite{Armstrong}.
\end{remark}


\begin{thebibliography}{50}
    \bibitem{Armstrong} Armstrong, S., Biquard O.: \textit{Einstein metrics with anisotropic boundary behaviour}, arXiv:0901.1051v1 [math.DG].
 \bibitem{Atiyah} Atiyah, M., Berndt, J.: \textit{Projective planes, Severi varieties and spheres}, Surveys in Differential Geometry, International Press of Boston Inc, 2003
    \bibitem{Baez} Baez, J. C.: \textit{The Octonions}, Bull. of Am. Math. Soc. \textbf{39} Nr. 2, 145--205, 2001.

    \bibitem{Biquard} Biquard, O.: \textit{Asymptotically symmetric {E}instein metrics}, SMF/AMS Texts and Monographs, American Mathematical Society, 2006.

   \bibitem{Bourbaki} Bourbaki, N.: \textit{Lie groups and Lie algebras. Chapters 4--6}, Elements of Mathematics, Springer-Verlag, Berlin, 2002. 

    \bibitem{CapSlovak} \v{C}ap, A., Slov\'ak, J.: \textit{Parabolic Geometries: Background and general theory}, Mathematical Surveys and Monographs, AMS Bookstore, 2009

    \bibitem{Dray} Dray, T., Manogue, C.A.: \textit{Octonionic Cayley Spinors and $\mathrm{E}_6$}, Comment. Math. Univ. Carolin. 51, 193–207 (2010)

    \bibitem{Friedrich} Friedrich, T.: \textit{Weak Spin(9)-Structures on 16-dimensional Riemannian Manifolds}, Asian Journal of Mathematics 5 (2001), pp. 129-160, arXiv:math/9912112v1 [math.DG]

%    \bibitem{Friedrich} Friedrich, T.: \textit{Dirac-Operatoren in der Riemannschen Geometrie}, Vieweg, Wiesbaden, 1997.

   % \bibitem{fyzici} G\"{u}naydin, M., Piron, C., Ruegg, H.: \textit{Moufang plane and octonionic quantum mechanics},  Comm. Math. Phys.  61  (1978), no. 1, 69--85.

    \bibitem{GoodmanWallach} Goodman, R., Wallach, N.R.: \textit{Representations and invariants of the classical groups}, Cambridge University Press, Cambridge, 1998.

    \bibitem{Harvey} Harvey, F. R.: \textit{Spinors and Calibration}, Academic Press, San Diego, 1990.

    \bibitem{Humphreys} Humphreys, J. E.: \emph{Linear algebraic groups}, Graduate Texts in Mathematics, No. 21. Springer-Verlag, 1975.

%    \bibitem{Knapp} Knapp, A. W.: \textit{Lie Groups Beyond an Introduction}, Birkh\"auser, Boston, 1996.
    \bibitem{Jacobson} Jacobson, N.: \emph{Structure and Representations of Jordan Algebras},
      AMS Bookstore, 2008

    \bibitem{JNW} Jordan, P., von Neumann, J., Wigner, E., \textit{On an algebraic generalization of the quantum mechanical formalism},
Ann. of Math. (2) 35 (1934), no. 1, 29--64. 

  \bibitem{K_clas1} Kr\'ysl, S.: \textit{Classification of $1^{st}$ order symplectic spinor operators in contact projective geometries}, Diff. Geom. Appl., Vol. 26, Issue 3, Elsvier, 2008.
  \bibitem{K_clasp} Kr\'ysl, S.: \textit{Classification of $\mathfrak{p}$-homomorphisms between higher symplectic spinors}, Rend. Circ. Mat. di Palermo, 2006.
    \bibitem{Krysl} Kr\'ysl, S.: \textit{BGG Diagrams for Contact Graded Odd Dimensional Orthogonal Geometries}, Acta Universitatis Carolinae Mathematica et Physica Vol. 45 No.1, Prague, 2004.

    %\bibitem{Landsberg_priv} Landsberg, J.M.: \textit{private communication}

    \bibitem{Landsberg} Landsberg, J. M., Manivel, L.: \textit{On the projective geometry of rational homogenous varieties}, Comment. Math. Helv. \textbf{78} No. 1, 65--100, 2003.

   \bibitem{Landsberg2} Landsberg, J. M., Manivel, L.: \textit{The projective geometry of Freudenthal's magic square},
    Journal of Algebra  239, 477-512 (2001)
%   arXiv.org:9908039 [math.AG], 1999.

%    \bibitem{Landsberg3} Landsberg, J. M.:\textit{On degenerate secant and tangential varieties and local differential geometry}, Duke Math.J. 85, 605-634 (1996).

    \bibitem{LiE} van Leeuwen, M. A. A.,  Cohen, A. M. and  Lisser, B.: \textit{LiE, A Package for Lie Group Computations}, Computer Algebra Nederland, Amsterdam, 1992, available at \url{http://www-math.univ-poitiers.fr/~maavl/LiE/}

    \bibitem{Moufang} Moufang, R.: \textit{Alternativk\"orper und der Satz vom vollst\"andigen Vierseit},
    Abhandl. Math. Univ. Hamburg 9 (1933), 207-222.

%        \bibitem{Rios} Rios, M.: \textit{The {G}eometry of {J}ordan {M}atrix {M}odels},        arXiv.org:0503015v3 [math-ph], 2005.

        \bibitem{Springer} Springer, T. A.,  Veldkamp, F. D.: \textit{{O}ctonions, {J}ordan algebras and exceptional groups}, Springer Monographs in Mathematics. Springer-Verlag, Berlin, 2000. 

  \bibitem{Yokota} Yokota, I.: \textit{Exceptional Lie groups},
  arXiv.org:0902.0431 [math.DG], 2009.

\end{thebibliography}
\end{document}